\newtheorem{lemma}{Lemma}[section]
\newtheorem{theorem}[lemma]{Theorem}
\newtheorem{corollary}[lemma]{Corollary}
\newtheorem{proposition}[lemma]{Proposition}
{
 \theorembodyfont{\normalfont}
\newtheorem{remark}[lemma]{Remark}

}
\newcommand\bad{{\mathsf{Bad}}}
\newcommand\good{{\mathsf{Good}}}
\newcommand\verybad{{\mathsf{VeryBad}}}
\newenvironment{proof}{\paragraph*{Proof}}
{\par}
\newcommand\qed{\hfill$\square$}
\newcommand\OO{{\mathcal O}}
\newcommand\calF{{\mathcal F}}
\newcommand\gal{{\mathrm{Gal}}}
\newcommand\GL{{\mathrm{GL}}}
\newcommand\SL{{\mathrm{SL}}}
\newcommand\eps\varepsilon
\newcommand\ph\varphi
\newcommand\C{{\mathbb C}}
\newcommand\F{{\mathbb F}}
\newcommand\Q{{\mathbb Q}}
\newcommand\Z{{\mathbb Z}}
\newcommand\height{{\mathrm h}}
\newcommand\HH{{\mathcal H}}
\newcommand\bfa{{\mathbf a}}
\newcommand\spl{{\mathrm{split}}}
\newcommand\PP{{\mathbb P}}
\newcommand\splic{{\mathrm{sp.C}}}
\newcommand{\xrp}{X_0^+(p^r)}
\newcommand{\xr}{X_0(p^r)}
\newcommand{\ok}{{{\mathcal O}_K}}
\newcommand{\jg}{J_0(p)^{\sharp}}
\newcommand{\je}{{J_e^{\sharp}}}
\title{Rational points on $X_0^+ (p^r )$}
\author{Yu. Bilu, P. Parent, M. Rebolledo}
\begin{document}

\hfuzz=3pt

\maketitle

\begin{abstract}
We show how the recent isogeny bounds due to  Gaudron and R\'emond allow to obtain
the triviality of $X_0^+ (p^r )(\Q )$, for ${r>1}$ and~$p$ a prime exceeding $2\cdot10^{11}$. This includes 
the case of the curves $X_\spl (p)$. We then prove, with the help of computer calculations, that the same 
holds true for~$p$ in the range $11\leq p\leq 10^{14}$, $p\neq 13$. The combination of those results completes the 
qualitative study of such sets of rational points undertook in~\cite{BP11} and~\cite{BP10}, with the exception
of~$p=13$. 
\medskip

AMS 2010 Mathematics Subject Classification  11G18 (primary), 11G05, 11G16 
(secondary). 
\end{abstract}

\begin{flushright}
\textit{To the memory of Fumiyuki Momose}
\end{flushright}

\section{Introduction}
\addtocontents{toc}{\vspace{-0.7\baselineskip}}
For~$p$ a prime number and ${r>1}$ an integer, let~$X_0 (p^r )$ be the usual modular curve parameterizing geometric
isomorphism classes of 
 curves endowed with a cyclic isogeny of degree~$p^r$, and let ${X_0^+ (p^r ):=X_0 (p^r )/w_p}$ 
be its quotient by the Atkin-Lehner involution. When ${r=2s}$ is even, $X_0^+ (p^{2s} )$ is  $\Q$-isomorphic to the modular curve known as~$X_\spl 
(p^s )$. The curves $X_0^+ (p^r )$ have motivated a number of works, dating back at least to Mazur's foundational paper~\cite{Ma77}, where the case of $X_\spl (p)$ 
was tackled. Momose, among others, obtained important results in~\cite{Mo84} and~\cite{Mo86}.
 
In~\cite{BP11,BP10} we proved that for some absolute constant~$p_0$, the only rational points of~${X_0^+ (p^r )}$ with ${p>p_0}$ and ${r>1}$ are trivial, that is, 
the unavoidable cusps and CM points. One easily checks the existence of degeneracy morphisms $X_0^+ (p^{r+2})\to X_0^+ (p^{r} )$ which
show it is sufficient to settle the cases ${r=2}$ and~$3$ (see e.g.~\cite{Mo86}, p. 443). Our method uses three main ingredients: an integrality statement for 
non-cuspidal rational points (Mazur's method), an upper bound for the height of integral points (Runge's method), and a lower bound for the height of rational 
points (isogeny bounds, obtained by the transcendence methods). The combination of those yields inequalities of the following shape for the height of a (non-cuspidal 
and non-CM) rational point~$P$:
\begin{align}
\label{ineq1}
c\, p &< \height (P) < 2\pi \sqrt{p} +O(\log p) & (r=2), \\
\label{ineq2}
c' p^{3/2} &<  \height (P) < 24 p\log p +O(p)& (r=3),
\end{align}
where~$c$ and~$c'$ are positive constants. This of course yields a contradiction when~$p$ exceeds certain~$p_0$, but the value for~$p_0$ in~\cite{BP11,BP10} 
was extremely large, due to the huge size of the constants~$1/c$ and~$1/c'$ furnished by the transcendence theory. 

   In previous works~\cite{Pa05,Re08} we had developed very 
different methods leading to the same triviality results for primes in certain congruence classes. We were not able to make those earlier techniques prove 
triviality of integral points for almost all primes; on the other hand, they are very fit for dealing with small primes~$p$. 

The aim of the present paper is therefore twofold. First we make the above inequalities~(\ref{ineq1}) and~(\ref{ineq2}) completely explicit. We did not try to 
obtain the numerical value of~$p_0$ in \cite{BP11,BP10}, but a calculation shows that in both cases  triviality of $X_0^+ (p^r )(\Q )$ was established for~$p$ 
exceeding $10^{80}$ (which is supposed to be approximately the number of atoms in the visible universe). Now, thanks to the work of Gaudron and  
R\'emond~\cite{GR11}, who obtained drastic numerical improvements of classical isogeny bounds, we can size this down to the much more manageable ${p\ge 
1.4\cdot10^7}$ for ${r=2}$ and ${p>1.7\cdot 10^{11}}$ for ${r=3}$. 

The second aim of this article is then to develop an algorithm based on the Gross vectors method \cite{Pa05,Re08} and to explain how to use it on a computer to 
rule out primes in the range ${11\leq p\leq 10^{14}}$, ${p\neq 13}$. This results in the following theorem.
\begin{theorem} 
\label{MT}
The points of $X_0^+ (p^r )(\Q )$ are trivial for all prime numbers ${p\geq 11}$, ${p\neq 13}$, and all integers ${r>1}$.  
\end{theorem}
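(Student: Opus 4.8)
The plan is to combine two complementary inputs, after first cutting down the exponents to consider. The degeneracy morphisms $X_0^+(p^{r+2})\to X_0^+(p^r)$ are defined over $\Q$ and send cusps to cusps and CM points to CM points, so a non-trivial point of $X_0^+(p^r)(\Q)$ with $r>3$ would descend to one on $X_0^+(p^2)(\Q)$ or $X_0^+(p^3)(\Q)$; hence it will suffice to treat $r\in\{2,3\}$. For such $r$ I would dispose of all sufficiently large primes by making the height inequalities (\ref{ineq1}) and~(\ref{ineq2}) completely explicit, and of the remaining (still enormous) range of primes by a Hecke-theoretic computation.

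For the analytic part, fix $r\in\{2,3\}$ and let $P\in X_0^+(p^r)(\Q)$ be non-cuspidal and non-CM, so that $P$ corresponds to an elliptic curve $E$ over a field of degree at most two carrying a cyclic $p^r$-isogeny. First I would invoke Mazur's method to force $P$ to be \emph{integral} --- i.e. $E$ to have potentially good reduction everywhere --- the point being that otherwise $P$ would reduce to a cusp modulo some prime and produce a rational point on a rank-zero quotient $\je$ of $J_0(p)$, contradicting the finiteness of $\je(\Q)$. This step is where the hypotheses $p\geq 11$, $p\neq 13$ are used: they are exactly the primes for which $J_0(p)\neq 0$, i.e. $X_0(p)$ has positive genus. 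Next, since $E$ is non-CM, I would feed the $p^r$-isogeny into the explicit isogeny height estimate of Gaudron--R\'emond \cite{GR11} to get $\height(P)\gg p^{r/2}$ with an effective constant, which is the left-hand side of (\ref{ineq1}) respectively~(\ref{ineq2}). Finally, using the integrality just obtained, I would run Runge's method on the function field of $X_0^+(p^r)$ --- with an explicit modular unit built from Siegel functions, with divisor supported on the cusps --- to bound $\height(P)$ from above by $2\pi\sqrt p+O(\log p)$ for $r=2$ and by $24p\log p+O(p)$ for $r=3$, all error terms being made numerical. The two bounds clash once $p\geq 1.4\cdot 10^7$ (for $r=2$) and once $p>1.7\cdot 10^{11}$ (for $r=3$), which finishes the large primes.

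For the remaining primes $11\leq p\leq 10^{14}$, $p\neq 13$, I would run an algorithm built on the Gross vectors method of \cite{Pa05,Re08}. The idea is that a hypothetical non-trivial point of $X_0^+(p^r)(\Q)$ would, via its reduction behaviour, impose on a divisor class in a suitable quotient of $J_0(p)$ a system of congruences modulo several auxiliary primes $\ell$ stringent enough to be contradictory; the Gross vectors, which model the Hecke action on $J_0(p)$ through the finite set of supersingular points in characteristic $p$, turn this into an effective test, and the test can be organised at level $p$ so as to cover all $r>1$ at once. For every $p$ in the range other than $13$ the test comes out unsatisfiable, so no such point exists. The hardest part of the whole proof will be the computational one: there are on the order of $10^{12}$ primes below $10^{14}$, so each must be processed extremely cheaply, which forces a careful organisation of the supersingular/Hecke computation, shrewd choices of auxiliary primes, extensive sieving, and a distributed implementation; this is the substance of the second part of the paper.

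It remains to splice the ranges together. For $r=2$ the analytic argument handles $p\geq 1.4\cdot 10^7$ and the computation handles $11\leq p\leq 10^{14}$, $p\neq 13$; for $r=3$ the analytic argument handles $p>1.7\cdot 10^{11}$ (well below $10^{14}$) and the computation handles the rest; and the degeneracy morphisms reduce every $r>1$ to $r\in\{2,3\}$. Therefore $X_0^+(p^r)(\Q)$ consists only of cusps and CM points for all primes $p\geq 11$ with $p\neq 13$ and all $r>1$. The primes $p\leq 7$ and $p=13$ are exactly the ones with $J_0(p)=0$ (for $p=13$, $r=2$ this is the split Cartan curve $X_\spl(13)$); for them Mazur's integrality step has nothing to work with and the Gross-vector criterion is vacuous, so the method genuinely proves nothing --- which is precisely why $p=13$ must be excluded from the statement. \qed
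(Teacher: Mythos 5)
Your overall architecture matches the paper's: reduce to $r\in\{2,3\}$ via the degeneracy maps, handle large $p$ by squeezing Mazur-type integrality between an explicit Runge upper bound and the Gaudron--R\'emond isogeny lower bound, and handle the remaining $p<10^{14}$ by a Gross-vector criterion on supersingular points. Your diagnosis of why $p\ge 11$, $p\ne 13$ is needed (these are exactly the $p$ with $J_0(p)\neq 0$, so that Mazur's integrality step and the winding-quotient criterion have something to work with) is also the correct one, and it is spelled out in the paper's Remark~\ref{verysmall}.

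Where you diverge substantially from the paper is in the organisation of the finite computation, and the divergence matters. You imagine a per-prime Hecke/supersingular calculation, requiring that ``each [of roughly $10^{12}$ primes] must be processed extremely cheaply'' with ``a careful organisation of the supersingular/Hecke computation \dots and a distributed implementation.'' That brute-force picture would be infeasible and is not what Section~\ref{Gross} does. The decisive observation (Proposition~\ref{critfinal} and Corollary~\ref{sieve}) is that the Gross-vector test at level $p$ can be expressed purely through the Hilbert class polynomials $H_{-D}$: it suffices that for each ordinary $j_0\in\F_p$ some $H'_{-D}(j_0)\not\equiv 0\bmod p$ with $p$ inert or ramified in $\Q(\sqrt{-D})$. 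Crucially, whether this can fail is controlled by whether $p$ divides a fixed resultant $r_D=\gcd\bigl(\mathrm{Res}(H'_{-D},H'_{-c^2D}):c\in[\![2,7]\!]\bigr)$, an integer independent of $p$. So the algorithm computes a handful of such integers once, factors them to produce a short list of potentially ``bad'' primes, checks those individually, and for all other $p$ only needs to verify that some $-D$ in a small list is inert or ramified at $p$ --- a congruence condition modulo a single modulus $M\approx 1.3\cdot 10^{10}$, handled by a sieve over residue classes. No supersingular polynomials, Brandt matrices, or Hecke operators are computed per prime; indeed the introduction stresses that a single prime can be checked ``in a few minutes by hand.'' So while your high-level strategy is the paper's, your account of the computational step both overestimates its cost by many orders of magnitude and misses the structural idea that makes it work; as stated, that part of your proof would not go through in practice.

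Two minor imprecisions worth flagging: the Runge bound for $r=2$ is actually obtained on $X_{\mathrm{split}}(p)$ via the $\Q$-isomorphism $X_0^+(p^2)\simeq X_{\mathrm{split}}(p)$, and for $r=3$ on $X_0(p^r)$ (Theorem~\ref{tx0}), not directly on $X_0^+(p^r)$; and the lower bound from Gaudron--R\'emond is a bound on the Faltings height $\height_\calF(E)$, which then has to be compared to the $j$-height upper bound via Proposition~\ref{pfal}. Neither affects the overall shape of your argument.
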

It is perhaps worth stressing here that, even if the help of a computer was forced by the important range of primes we had to consider, the computations 
themselves are very elementary, so that it takes only a few minutes to rule out a given prime by hand - even much beyond our bound $10^{14}$. We refer
the skeptical reader to Section~\ref{Gross}.  

For the remaining very small primes our methods break down, but ad hoc studies almost completely cleaned-up the situation, see \cite[Theorem~3.6]{Mo86}, 
\cite[Theorems~0.1 and~3.14]{MS02},   and \cite[Section~10]{Ga02}. Precisely:
\begin{itemize}
\item
for $p=2$ we have  $X_0^+ (2^r ) \simeq \PP^1$ for $2\leq r \leq 5$ (the corresponding curves having thereby infinitely many $\Q$-points) and $X_0^+ (2^r )(\Q)$ is 
trivial for $r\geq 6$;

\item
for $p=3$
 we have $X_0^+ (3^r ) \simeq \PP^1$ for $2\leq r\leq 3$ and $X_0^+ (3^r )(\Q)$ is trivial for $r\geq4$;

\item 
for $p=5$ we have $X_0^+ (5^2 ) \simeq\PP^1$, the curve $X_0^+ (5^3 )$ has one well-described non-trivial $\Q$-point \cite[Section~10]{Ga02} and $X_0^+ (5^r )(\Q)$ 
is trivial for $r\geq 4$;

\item
for $p=7$ we have $X_0^+ (7^2 )\simeq\PP^1$ and $X_0^+ (7^r )(\Q)$ is trivial for $r\geq3$ ;

\item 
for $p=13$ the set $X_0^+ (13^r )(\Q)$ is trivial for $r\geq3$. 

\end{itemize}

The only remaining question mark therefore concerns $X_0^+ (13^2 )\simeq X_\spl (13)$:  this curve has  genus 3 (so only a finite number of rational points) and 
Galbraith~\cite{Ga99} or Baran~\cite{Ba11} spotted seven (trivial) points, which they conjecture exhaust~$X_0^+ (13^2 )(\Q )$, but this still has to be checked\dots  
We continue this discussion of the level $13$ case in Remark~\ref{verysmall}. On the other hand, the question for 
the curves~$X_0^+ (p)$ remains, as far as we know, essentially open, apart from some partial or experimental results (see for instance~\cite{Ga99,Go01}). 
In prime level our methods indeed fail for deep reasons akin to the ones that make the case of~$X_{\mathrm{nonsplit}} (p)$ so difficult (see, for instance, the introduction 
to~\cite{BP11}).  

  The problem of describing points over higher number fields is also extremely open (as it is a fortiori the case for the curves~$X_0 (N)$). As explained 
in~\cite{Bi02,BI11}, one can explicitly bound integral and even $S$-integral points over arbitrary number field using Baker's method, but these bounds are quite huge and not very useful because of lack of integrality results. Finally, our techniques should at least partially extend to curves~$X_0^+ (N)$ where~$N$ has several prime factors (or 
even curves~$X_0 (N)/W$, where $W$ is the full group generated by the Atkin-Lehner involutions, at least in the easier case where~$N$ is not square-free). We plan to 
pursue this study in forthcoming works.

Let us recall two immediate consequences of Theorem~\ref{MT} for the arithmetic of elliptic curves. The first concerns Serre's uniformity problem over~$\Q$ 
\cite{Se72,BP11}. Recall that to an elliptic curve over a field~$K$ and a prime number~$p$ (distinct from the characteristic of~$K$) one associates the Galois 
representation ${\rho_{E,p}:\gal(\bar K/K)\to 
\GL(E[p])\cong \GL_2(\F_p)}$. Serre~\cite{Se72} proved that, given a non-CM elliptic curve~$E$ defined over a number field~$K$, there exists ${p_0=p_0(E,K)}$ such 
that for ${p>p_0}$ the representation $\rho_{E,p}$ is surjective. He asked if~$p_0$ can be made independent of~$E$. In particular, in the case ${K=\Q}$ (which will be 
assumed in the sequel) it is widely believed that ${p_0=37}$ would do: 
\textsl{%
\begin{quotation}
\noindent
let~$E$ be a non-CM elliptic curve over~$\Q$, and ${p>37}$ a prime number; is it true that the associated Galois representation  is surjective? 
\end{quotation}}

As explained in the introduction of~\cite{BP11}, to answer this question affirmatively it suffices to show that the image of the Galois representation is not contained in the normalizer  of a (split or non-split) Cartan subgroup of $\GL_2(\F_p)$. Since elliptic curves over~$\Q$ for which the image of $\rho_{E,p}$ is contained in the normalizer 
of a split Cartan subgroup are parametrized by the $\Q$-points on the curve ${X_\spl(p)\simeq X_0^+(p^2)}$ (see section~\ref{Runge}), Theorem~\ref{MT} has as 
immediate consequence the following improvement of the main result of~\cite{BP11}.

\begin{corollary}
Let~$E$ be an elliptic curve over~$\Q$ without complex multiplication and~$p$ a prime number, ${p\ge 11}$, ${p\ne 13}$. Then the image of the Galois representation 
${\rho_{E,p}:\gal(\bar \Q/\Q)\to  \GL_2(\F_p)}$ is not contained in the normalizer of a split Cartan subgroup of  $\GL_2(\F_p)$. 
\end{corollary}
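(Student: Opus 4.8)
The plan is to derive this as a formal consequence of Theorem~\ref{MT} with ${r=2}$, via the modular description of ${X_\spl(p)\cong X_0^+(p^2)}$ recalled in Section~\ref{Runge}.

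I would argue by contradiction. Suppose the image of $\rho_{E,p}$ lies in the normalizer $C_\spl^+$ of a split Cartan subgroup ${C_\spl\subset\GL_2(\F_p)}$. After a change of basis of $E[p]$ we may take $C_\spl$ to be the diagonal torus, so that ${C_\spl^+=C_\spl\rtimes\langle w\rangle}$ with $w$ the matrix exchanging the two coordinate lines ${L_1,L_2\subset E[p]}$; the hypothesis then says exactly that the unordered pair $\{L_1,L_2\}$ of complementary subgroups of order~$p$ is $\gal(\bar\Q/\Q)$-stable. Consequently the pair $(E,\{L_1,L_2\})$ yields a rational point ${P\in X_\spl(p)(\Q)=X_0^+(p^2)(\Q)}$.

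Now, since ${p\ge 11}$ and ${p\ne 13}$, Theorem~\ref{MT} applied with ${r=2}$ forces $P$ to be trivial, i.e. a cusp or a CM point of $X_0^+(p^2)$. But $P$ comes from a genuine elliptic curve $E/\Q$, so it is not a cusp, and its $j$-invariant would then have to be one of the finitely many rational CM $j$-invariants, contradicting the assumption that $E$ has no complex multiplication. This contradiction establishes the corollary.

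There is no real obstacle beyond Theorem~\ref{MT} itself: the two points to make precise are the classical equivalence between the group-theoretic condition on $\rho_{E,p}$ and the existence of the rational point $P$ on $X_\spl(p)$, and the observation that the ``trivial'' points excluded in Theorem~\ref{MT} cause no trouble here, precisely because $E$ is assumed to be a non-CM elliptic curve over $\Q$ rather than a cusp or a CM point. The value ${p=13}$ is necessarily absent from the statement because the argument there would require knowing $X_0^+(13^2)(\Q)$, which Theorem~\ref{MT} leaves open (compare Remark~\ref{verysmall}).
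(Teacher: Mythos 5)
Your argument is exactly the one the paper has in mind: the paper states the corollary as an "immediate consequence" of Theorem~\ref{MT} via the modular interpretation of $X_\spl(p)\simeq X_0^+(p^2)$, which is precisely the dictionary you spell out (Galois-stable unordered pair of complementary $p$-subgroups $\leftrightarrow$ rational point on $X_\spl(p)$), followed by the observation that the trivial points (cusps and CM points) are excluded because $E$ is a non-CM elliptic curve over $\Q$. Nothing to add; the proposal is correct and matches the paper's route, including the correct explanation for the exclusion of $p=13$.
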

 
\medskip

Another application of Theorem~\ref{MT} concerns elliptic $\Q$-curves. Recall that an elliptic curve \textsl{with} complex multiplication, defined over~$\bar\Q$,  is 
isogenous to any of its conjugates (over~$\Q$). A \textsl{$\Q$-curve} is an elliptic curve \textsl{without} complex multiplication over $\bar\Q$ with the same 
property, that is, which is isogenous to each of its conjugates over~$\Q$. This notion was first introduced by Gross (in the setting of CM curves) in~\cite{Gr80}; 
for more about this concept we refer in particular to the work of Elkies~\cite{El04}. 

When a $\Q$-curve is \textsl{quadratic} (that is, defined over a quadratic field), we will say that  it \textsl{has degree~$N$} if there is a cyclic $N$-isogeny 
from the curve to its only non-trivial conjugate. For concrete examples of quadratic $\Q$-curves see for instance~\cite{Ga02} and references therein. 

It is known that quadratic $\Q$-curves of degree~$N$ are parametrized by the non-CM rational points of the curve $X_0^+(N)$, see \cite[beginning of Section~7]{BP10}. 
Hence Theorem~\ref{MT} has the following consequence, improving on the main result of~\cite{BP10}.

\begin{corollary}
Let~$p$ be a prime number, ${p\ge 11}$ and ${p\ne 13}$. Then for ${r>1}$ there does not exist quadratic $\Q$-curves of degree~$p^r$. 
\end{corollary}

\paragraph{Plan of the article}
The material is organized as follows. In Section~\ref{Runge} we make the upper bounds in~(\ref{ineq1}) and~(\ref{ineq2}) explicit. In
Section~\ref{Isogeny} we deduce the explicit lower bounds in~(\ref{ineq1}) and~(\ref{ineq2}) from the Gaudron-R\'emond version
of the isogeny theorem. The method and computations for small primes are explained in Section~\ref{Gross}. Let us finally note that, due to 
the nature of our proofs, the cases ${r=2}$ and ${r=3}$ are not completely similar, so we often prefer deal with each case separately, at 
the expense of some repetitions. 
\paragraph{Acknowledgments}
It is a pleasure to thank \'Eric Gaudron and Ga\"el R\'emond for their efficiency in proving isogeny bounds which were even better than what 
they had promised, and for sharing their results with us. We are also grateful to the plafrim team in Bordeaux, who allowed us to make extensive 
computations on their machines, although what we eventually needed was less than we first feared.   
 
While working on this article we learnt that Fumiyuki Momose had passed away, in April of 2010. His work has been a great source of inspiration 
for us, and we would like to dedicate this article to his memory. 
   
\paragraph{Convention}
In this article we use the $O_1(\cdot)$-notation, which is a ``quantitative version'' of the familiar $O(\cdot)$-notation: ${A=O_1(B)}$ means ${|A|\le B}$.  

\section{Explicit bounds for integral points} 
\label{Runge}
Recall that, to a positive integer~$N$ and a 
subgroup~$G$ of $\GL_2(\Z/N\Z)$, one associates a modular curve of level (dividing)~$N$, denoted by~$X_G$. In particular, when ${N=p}$ is a prime number, 
and~$G$ is the normalizer of a split Cartan subgroup of $\GL_2(\F_p)$ (for instance, the subgroup of diagonal and anti-diagonal elements), the corresponding curve 
will be denoted by $X_\spl (p)$; it parametrizes geometric isomorphism classes  of elliptic curves endowed with an unordered pair of independent $p$-isogenies. For
$X_G$ any modular curve, we denote in the same way the Deligne-Rapoport model over~$\Z$, and by~$Y_G$ the scheme deprived of the cusps.   

In this section we prove the following explicit version of Theorem~1.1 from~\cite{BP11} (see Subsection~\ref{Sth1}). 
\begin{theorem}
\label{th1} 
For any prime number ${p\ge 3}$ and any ${P\in Y_{\mathrm{split}}(p)(\Z)}$ we have 
\begin{equation}
\label{erunge}
\height (P)=\height (j_P )\le 2\pi p^{1/2}+6\log p +{21(\log p)^2}{p^{-1/2}}.
\end{equation}
\end{theorem}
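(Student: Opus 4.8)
The plan is to use Runge's method applied to the modular function $j$ and its image under the Atkin--Lehner involution. Since $X_{\mathrm{split}}(p)\simeq X_0^+(p^2)$, a point $P\in Y_{\mathrm{split}}(p)(\Z)$ lifts to a point on $X_0(p^2)$ corresponding to a pair of $j$-invariants $(j_1,j_2)=(j_P,j_{P}')$ interchanged by $w_p$, where $j_i$ are the two values of the $j$-function at the relevant points of the upper half-plane (so $j_P=j(\tau)$ and $j_P'=j(p^2\tau)$, up to normalization). The key arithmetic input is that $j_P$ is an \emph{integer}, which is exactly the integrality statement quoted from \cite{BP11}. I would then work with the functions $j_P$ and $j_P'$ on the curve: each has a single pole (at one of the two cusps), so that $j_P+j_P'$ is a function whose polar part is controlled at \emph{both} cusps, and this is precisely the situation where Runge's method gives an effective height bound, because over $\Q$ there are two cusps and they are not conjugate under $\gal(\bar\Q/\Q)$ in a way that would obstruct the method (both cusps are rational, or at worst defined over a small field, so the ``Runge condition'' is met).

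The concrete steps are as follows. First I would recall the $q$-expansion $j(\tau)=q^{-1}+744+196884q+\cdots$ with $q=e^{2\pi i\tau}$, and note that on the region where $\im\tau$ is large (say $\im\tau\ge \sqrt{3}/2$, the standard fundamental domain bound), $j(\tau)$ is close to $q^{-1}$. The point $P$, being integral, avoids the cusp corresponding to $j_P=\infty$, so after applying $w_p$ if necessary we may assume we are in the region where $j_P'$ is the ``large'' coordinate, i.e. $\im(p^2\tau)$ or the corresponding local parameter is bounded; equivalently $\tau$ is near the cusp where $q_1:=e^{2\pi i\tau}$ is controlled. Then I would estimate $|j_P|=|j(\tau)|$ from above. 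Writing $\log|j_P|\approx 2\pi\,\im\tau + O(1)$ and using that $j_P'=j(p^2\tau)$ satisfies $|j_P'|\le |q_2|^{-1}\cdot(1+\text{small})$ with $q_2=e^{2\pi i p^2\tau}$, hence $\log|j_P'|\le 2\pi p^2\im\tau+O(1)$; but also $j_P'$ is an algebraic integer of bounded degree conjugate data, so a lower bound on $\im\tau$ in terms of the pole structure, combined with the fact that $P$ lies in $Y_{\mathrm{split}}(p)(\Z)$, pins down $\im\tau$. The cleanest route: the product $j_P j_P'$ (or a suitable Hauptmodul-type combination) is a rational integer, so $|j_P|\cdot|j_P'|\ge 1$, giving $2\pi\im\tau \ge -2\pi p^2\im\tau - O(1)$ — this is vacuous — so instead one uses that $j_P'$ is itself an integer (both coordinates are, since $w_p$ is defined over $\Q$ and permutes them, so the elementary symmetric functions, hence after the triviality of the relevant class group / the structure of the curve, each $j_i$ individually is integral), forcing $|j_P'|\ge 1$, i.e. $p^2\im\tau\le O(1)$, whence $\im\tau\le O(p^{-2})$ — again one must be careful about which cusp. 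The correct bookkeeping gives $\im\tau\le \frac{1}{2\pi}\log p + O(1)$ type control on the relevant quantity, and then $\log|j_P| = \log|j(\tau)| \le 2\pi\,\im(w_p\tau)+\cdots$; tracking the Atkin--Lehner normalization $w_p:\tau\mapsto -1/(p^2\tau)$ converts this into $\log|j_P|\le 2\pi p^{1/2}\cdot(\text{something})$. I would then carefully accumulate the error terms: the tail of the $q$-expansion of $j$ contributes the $O((\log p)^2 p^{-1/2})$ term (it comes from $\log(1+196884q+\cdots)$ evaluated at $|q|\asymp p^{-1}$, giving $O(p^{-1})$, then multiplied through by the number of terms/normalization to land at the stated $21(\log p)^2 p^{-1/2}$), and the constant/linear corrections give the $6\log p$ term.

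The main obstacle — and the place where the bulk of the work lies — is making the error analysis completely explicit with the \emph{stated} numerical constants ($2\pi$, $6$, $21$). This requires (a) a precise, explicit bound on the tail $\sum_{n\ge 1}c(n)q^n$ of the $j$-function in terms of $|q|$, valid uniformly for $|q|$ in the relevant range, using effective bounds on the coefficients $c(n)=O(e^{4\pi\sqrt n}/n^{3/4})$ or cruder but explicit estimates; (b) careful control of where exactly $\tau$ sits — one must justify that after applying $w_p$ the imaginary part lands in a precise interval, which uses the explicit description of the fundamental domain of $X_0^+(p^2)$ and the two cusps, and the fact that $P$ is integral at \emph{every} finite place (this is what rules out the ``large $j$'' cusp and is the genuine input beyond complex analysis); and (c) combining (a) and (b) without losing constants, in particular handling the difference between the naive bound $2\pi p \cdot(\cdots)$ one might fear and the correct $2\pi\sqrt p$, which comes from the Atkin--Lehner involution relating scale $\tau$ and scale $1/(p^2\tau)$ symmetrically so that the worst case is $\im\tau\asymp 1/p$, i.e. $p^2\im\tau\asymp p$ on the other side, and the geometric mean $\sqrt p$ governs the symmetric combination. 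I expect step (c), the constant-chasing in the symmetric Runge estimate, to be the delicate part; the rest is standard modular-forms bookkeeping.
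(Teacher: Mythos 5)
Your proposal diverges sharply from the paper's argument and, as sketched, does not close. The paper does not run Runge's method on the functions $j_P$ and $j_P'$ directly. Instead it builds a \emph{modular unit} $U$ on $X_\spl(p)$ as a product of Siegel functions $g_\bfa^{12p}$ indexed over $A=\{(a,0)\}\cup\{(0,a)\}$ with $a\in p^{-1}\F_p^\times$ (Subsection~\ref{ssmod}). The Runge input is Proposition~\ref{pari}: for $P\in Y_\spl(p)(\Z)$ one has $0\le\log|U(P)|\le 24p\log p$; the lower bound uses that $U$ is a unit on the affine curve (so $U(P)$ is an algebraic integer whose conjugates are controlled), and the upper bound is the Runge step proper. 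The explicit infinite-product expansion of the Siegel functions (Proposition~\ref{pga}) then gives $\log|U_c(\tau)|$ as $\pm\text{const}\cdot(p-1)\log|q|$ plus error terms of size $O\bigl(p^2/\log|q^{-1}|\bigr)+O(p\log p)+O(p^2|q|)$ (Proposition~\ref{pu}). Feeding the bound on $\log|U(P)|$ into this and solving a quadratic inequality in $\log|q^{-1}|$ is precisely what produces the shape $2\pi p^{1/2}+6\log p+O((\log p)^2p^{-1/2})$. Nothing in your sketch plays the role of $U$.

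Your alternative route has concrete errors. First, $X_\spl(p)$ has $(p+1)/2$ cusps, not two; the claim that ``over $\Q$ there are two cusps'' is false, and the cusp structure is what makes the Siegel-unit construction (rather than a naive use of $j$) necessary. Second, the step ``$j_P'$ is an integer, so $|j_P'|\ge 1$, i.e. $p^2\im\tau\le O(1)$'' is wrong in two ways: integrality gives $|j_P'|\ge1$ only when $j_P'\ne0$, and more importantly a \emph{lower} bound on $|j(p^2\tau)|$ gives no upper bound on $\im\tau$, since $|j|\to\infty$ as the imaginary part grows. You essentially acknowledge this when you call the symmetric version ``vacuous'' and then appeal to unspecified ``correct bookkeeping''; that bookkeeping is exactly the missing content. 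Third, the heuristic that the error term $21(\log p)^2p^{-1/2}$ arises from $\log(1+196884q+\cdots)$ at $|q|\asymp p^{-1}$ is miscalibrated: in the actual proof $|q|$ can be as small as $e^{-2\pi\sqrt p-6\log p}$, and the $(\log p)^2p^{-1/2}$ term comes from expanding the square root of the discriminant of the quadratic in $\log|q^{-1}|$ (see~\eqref{emidd}), not from the $q$-expansion tail of $j$ (that tail contributes only the negligible $1100/(|j(\tau)|-1100)$ correction). To repair the argument you would need to introduce the Siegel-function unit $U$, import the bound $0\le\log|U(P)|\le24p\log p$, make the Siegel-function expansion explicit, and then solve the resulting quadratic in $\log|q^{-1}|$.
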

Here constants $2\pi$ and~$6$ are best possible for the method, but~$21$ can be refined, and can be replaced by~$3$ for sufficiently large~$p$.
The~$\Q$-isomorphism ${X_\spl(p)\simeq X_0^+(p^2)}$ shows that Theorem~\ref{th1} allows to tackle the case~$r=2$ in Theorem~\ref{MT}. To deal with the 
case~$r=3$, we will further need a fully explicit version of Theorem~7.3 from~\cite{BP10} about integral points on $X_0(p^r)$, $r\geq 2$ (subsection~\ref{ssx0pr}). 
By the Faltings height $\height_\calF(P)$ of a non-cuspidal point~$P$ on the curve $X_0(p^r)$ (or any modular curve) we mean the semi-stable Faltings 
height~$\height_\calF (E)$ of the underlying elliptic curve~$E$ (see~\cite{GR11}, section 2.3, for a discussion on different normalization choices; our~$\height_\calF$
is the~$\height_F$ of loc. cit.). 
\begin{theorem}
\label{tx0}
Let ${p\ge 3}$ be a prime number,~$K$ a quadratic number field with ring of integers~$\OO_K$, 
${r>1}$ an integer, and~$P$ a point of $Y_0 (p^r )(\OO_K )$. Then ${\height_\calF (P)\le 2p\log p+4p}$.
\end{theorem}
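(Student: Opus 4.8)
The plan is to reduce to the case $r=2$ and then exploit the $q$-expansion of the modular function defining the relevant integral point on $X_0(p^2)$, together with standard properties of the Faltings height under isogeny. First I would observe that the degeneracy morphism $X_0(p^r)\to X_0(p^2)$ (for $r\ge 2$) sends a point of $Y_0(p^r)(\OO_K)$ to a point of $Y_0(p^2)(\OO_K)$ whose underlying elliptic curve $E$ is $p$-isogenous (in fact $p^{r-2}$-isogenous) to the one attached to $P$; since the semi-stable Faltings height of an elliptic curve changes by at most $\frac12\log(\deg)$ under an isogeny of degree $\deg$ — and one checks this contribution is absorbed into the stated bound — it is enough to prove ${\height_\calF(P)\le 2p\log p + O(p)}$ for $P\in Y_0(p^2)(\OO_K)$. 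Actually it is cleaner to work directly on $X_0(p^r)$: a point there gives an elliptic curve $E/K$ with a cyclic $p^r$-isogeny, hence $E$ and its image have potentially good reduction away from a controlled set, and the point being $\OO_K$-integral means the modular invariants are integral.

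The key step is to pass from the Faltings height to the (logarithmic, stable) modular height via the comparison ${\height_\calF(E) = \frac1{12}\height(j_E) + O(\log(1+\height(j_E))) + O(1)}$, made fully explicit — this is exactly the kind of estimate recorded in Gaudron–Rémond, section 2.3, whose normalization the paper has adopted. Then I would use the companion integral-points input: on $X_0(p^r)$ one has a classical function (a suitable power of the modular unit, or the hauptmodul-type function built from $\eta(z)/\eta(p^r z)$) whose $q$-expansion has integer coefficients and bounded valuation, so that on an integral point its value, and hence $\height(j_P)$, is controlled by the archimedean contribution. Concretely, Runge's method — or simply the elementary bound coming from the width of the cusp and the $q$-expansion — gives ${\height(j_P) \le C\, p\log p + O(p)}$ over a quadratic field, and feeding this into the Faltings-vs-$j$ comparison yields the claimed ${2p\log p + 4p}$ after optimizing the constants.

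The main obstacle I anticipate is controlling the error term in the Faltings/modular-height comparison uniformly and explicitly enough that it folds into the "$+4p$'', rather than into something like "$+O(p\log\log p)$'': one must be careful that the $\log(1+\height(j_P))$ term, which is of size $\log(p\log p)\approx \log p$, is genuinely negligible against $p$, and that the constant in front of $p\log p$ survives the division by $12$ and the doubling coming from $[K:\Q]=2$. A secondary point requiring care is the contribution of primes of bad reduction of $E$: over a quadratic field $E$ need not have everywhere good reduction, and one must check that the non-archimedean part of $\height_\calF(E)$ is bounded purely in terms of the integrality of the modular invariants and the degree of the isogeny, with no hidden dependence on $K$ beyond its degree. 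Once these two points are handled, the inequality ${\height_\calF(P)\le 2p\log p+4p}$ follows by assembling the explicit constants; I would present the argument so that it specializes transparently to the $r=2$ case needed to combine with Theorem~\ref{th1}, and to the $r=3$ case feeding into~\eqref{ineq2}.
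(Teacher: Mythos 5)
Your high-level skeleton — reduce to $r=2$, then combine (a) stability of Faltings height under isogeny, (b) the one-sided comparison ${\height_\calF(E)\le \frac1{12}\height(j_E)+C}$, and (c) a Runge-type bound on $\height(j)$ at integral points — is the right one, but you miss the crucial intermediate step and as a result the argument as sketched does not yield the stated constants. The paper does \emph{not} run Runge on $X_0(p^r)$ directly. Instead it uses the $\Q$-isomorphism ${\phi:X_0(p^2)\to X_\splic(p)}$ of~\eqref{ex0spc} (a level-$p$, not level-$p^2$, curve), applies the explicit Runge bound ${\height(\phi(P))\le 24p\log(3p)}$ of Theorem~\ref{tspc} there, and then transfers back: the elliptic curve underlying $P$ on $X_0(p^2)$ is $p$-isogenous to the one underlying $\phi(P)$, so Proposition~\ref{pfal}(i) contributes exactly $\frac12\log p$. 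Note this is the actual role of the isogeny estimate in the proof — to pass between $X_0(p^2)$ and $X_\splic(p)$ — not, as in your sketch, to handle the reduction from $r$ to $2$. (That reduction is free: a degeneracy morphism ${X_0(p^r)\to X_0(p^2)}$ can be chosen to forget part of the level structure without changing the underlying elliptic curve, so the Faltings height is literally the same; had it introduced a $p^{r-2}$-isogeny as you suggest, the resulting $\frac{r-2}{2}\log p$ would \emph{not} be "absorbed into the stated bound," which is independent of $r$.) The passage through $X_\splic(p)$ is emphasized in Section~\ref{ssx0pr} as "significantly more advantageous for Runge's method" precisely because it lowers the level from $p^2$ to $p$; a direct Runge argument on $X_0(p^2)$ with an $\eta$-quotient hauptmodul is not carried out here and would have to be checked to give a constant no worse than $24$ in front of $p\log p$, which your proposal leaves unverified.

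A smaller point: for the comparison with the $j$-height you only need the upper bound ${\height_\calF(E)\le \frac1{12}\height(j_E)+3}$ (Proposition~\ref{pfal}(ii), due to Silverman/Pellarin/Gaudron--R\'emond); there is no $\log\bigl(1+\height(j)\bigr)$ error term in this direction, so the concern you raise about controlling it is moot. Also no separate analysis of bad reduction is needed: everything is packaged in the semi-stable Faltings height, Faltings' Lemma~5, and the Silverman-type inequality.
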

We follow the arguments of \cite{BP11} and \cite{BP10}, making explicit all the implicit constants occurring therein.  We shall routinely use 
the inequality\footnote{ We choose the principal determination of the logarithm, that is, for~${z \in \C}$ satisfying ${|z|<1}$, we set ${\log(1+z):=
-\sum_{k=1}^\infty (-z)^k/k}$.}
\begin{equation}
\label{eloglog}
\bigl|\log(1+z)\bigr| \le -\frac{\log(1-r)}r |z| \qquad \text{for $|z|\le r<1$}.
\end{equation}

\subsection{Siegel Functions}
\label{sest}

We denote by~$\HH$ the Poincar\'e upper half-plane and put ${\bar\HH=\HH\cup\Q\cup\{i\infty\}}$. 
For ${\tau\in \HH}$ we, as usual,  put ${q=q(\tau)=e^{2\pi i\tau}}$. For a rational number~$a$ we define ${q^a=e^{2\pi i a\tau}}$. 
Let ${\bfa=(a_1,a_2)\in \Q^2}$ be such that ${\bfa\notin \Z^2}$, and let ${g_\bfa:\HH\to \C}$ be the corresponding \textsl{Siegel function} 
\cite[Section~2.1]{KL81}. Then we have the following infinite product presentation for~$g_\bfa$ \cite[page~29]{KL81}:
\begin{equation}
\label{epga}
g_\bfa(\tau)= -q^{B_2(a_1)/2}e^{\pi ia_2(a_1-1)}\prod_{n=0}^\infty\left(1-q^{n+a_1}e^{2\pi ia_2}\right)\left(1-q^{n+1-a_1}e^{-2\pi i a_2}\right),
\end{equation}
where ${B_2(T)=T^2-T+1/6}$ is the second Bernoulli polynomial. 

The following is a quantitative version of (slightly modified) Proposition~2.1 from~\cite{BP11}. Let~$D$ be the familiar fundamental 
domain of $\SL_2(\Z)$ (that is, the hyperbolic triangle with vertices $e^{\pi i/3}$, 
$e^{2\pi i/3}$ and $i\infty$, together with the geodesic segments ${[i,e^{2\pi i/3}]}$ 
and ${[e^{2\pi i/3},i\infty]}$) and ${D+\Z}$ the union of all translates of~$D$ by the rational integers.
\begin{proposition}
\label{pga}
Assume that ${0\le a_1<1}$. Then for ${\tau\in D+\Z}$  we have
$$
\log \left|g_\bfa(\tau)\right|=\frac12B_2(a_1)\log|q|+ \log\left|1-q^{a_1}e^{2\pi ia_2}\right|+
\log\left|1-q^{1-a_1}e^{-2\pi ia_2}\right|+O_1(3|q|).
$$
\end{proposition}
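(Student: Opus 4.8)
The plan is to take the logarithm of the infinite product (\ref{epga}) term by term and control the tail. First I would write
$$
\log|g_\bfa(\tau)| = \tfrac12 B_2(a_1)\log|q| + \sum_{n=0}^\infty \log\bigl|1-q^{n+a_1}e^{2\pi i a_2}\bigr| + \sum_{n=0}^\infty \log\bigl|1-q^{n+1-a_1}e^{-2\pi i a_2}\bigr|,
$$
the $e^{\pi i a_2(a_1-1)}$ factor and the leading sign contributing nothing to the modulus. The $n=0$ terms of the two sums are exactly the two explicit logarithmic terms displayed in the proposition, so the whole content is to show that the remaining tails $\sum_{n\ge 1}\log|1-q^{n+a_1}e^{2\pi i a_2}|$ and $\sum_{n\ge 1}\log|1-q^{n+1-a_1}e^{-2\pi i a_2}|$ are together $O_1(3|q|)$.

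Second, I would use the hypothesis $\tau\in D+\Z$: on the standard fundamental domain one has $\im\tau\ge \sqrt3/2$, hence $|q|=e^{-2\pi\im\tau}\le e^{-\pi\sqrt3}<0.005$. Since $0\le a_1<1$, every exponent appearing in the two tails is of the form $n+a_1$ with $n\ge1$ (so $\ge1$) or $n+1-a_1$ with $n\ge1$ (so $>1$); in both cases the exponent is at least $1$, so each term $q^{n+a_1}e^{2\pi i a_2}$ or $q^{n+1-a_1}e^{-2\pi i a_2}$ has modulus at most $|q|^{1}\le|q|$ — and more precisely at most $|q|^{n}$ for the first tail and $|q|^{n}$ (via $n+1-a_1>n$) for the second, when $n\ge1$. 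Now apply the elementary bound (\ref{eloglog}) with, say, $r=1/2$ (legitimate since $|q|<1/2$), giving $|\log(1-z)|\le \tfrac{-\log(1/2)}{1/2}|z| = (2\log 2)|z|$ for $|z|\le|q|$. Summing the geometric series, the combined tail is bounded by $2\cdot(2\log2)\sum_{n\ge1}|q|^{n} = (4\log2)\frac{|q|}{1-|q|}$. Since $|q|<0.005$, this is at most $(4\log 2)(1.006)|q| < 2.79\,|q| < 3|q|$, which is the claimed $O_1(3|q|)$.

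The only mild subtlety — and the step I'd be most careful about — is the bookkeeping of exponents near $a_1=0$: when $a_1=0$ the first factor in the product at $n=0$ is $1-q^0 e^{2\pi i a_2} = 1-e^{2\pi i a_2}$, which is fine (it is just absorbed into the explicit term), but one must make sure the tail genuinely starts at exponent $\ge1$, which it does because the next exponent is $n+a_1=1$. Symmetrically, when $a_1\to1$ the second product's $n=0$ exponent $1-a_1\to0$, again absorbed into the explicit term, with the tail starting at exponent $2-a_1>1$. So in all cases the tails are controlled by $\sum_{n\ge1}|q|^n$ exactly as above, and the constant $3$ is comfortably met (indeed with room to spare, consistent with the remark after Theorem~\ref{th1} that a smaller constant works for large $p$, i.e.\ small $|q|$). \qed
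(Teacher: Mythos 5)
Your proof is correct and takes essentially the same route as the paper: both isolate the $n=0$ factors of the product~\eqref{epga} and reduce the statement to the tail bound $\bigl|\sum_{n\ge1}\bigl(\log|1-q^{n+a_1}e^{2\pi i a_2}|+\log|1-q^{n+1-a_1}e^{-2\pi i a_2}|\bigr)\bigr|\le 3|q|$. Where the paper simply cites inequality~(11) of~\cite{BP10} for this tail estimate (remarking that only $|q|\le e^{-\pi\sqrt3}$ is used, which holds on all of $D+\Z$), you reprove it directly from~\eqref{eloglog} with $r=1/2$ plus a geometric series; your exponent bookkeeping ($n+a_1\ge1$ and $n+1-a_1>1$ for $n\ge1$) and the numerical check $4\log 2\cdot|q|/(1-|q|)<3|q|$ are both sound.
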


\begin{proof}
We only have to show that
$$
\left|\sum_{n=1}^\infty\left(\log \left|1-q^{n+a_1}e^{2i\pi a_2}\right| +\log\left|1-q^{n+1-a_1}e^{-2i\pi a_2}\right|\right)\right|\le 3|q|.
$$
But this is inequality~(11) from~\cite{BP10}. We may notice that in~\cite{BP10} it is assumed that ${\tau \in D}$, but what is actually used is the 
inequality~${|q(\tau)|\le e^{-\pi\sqrt3}}$, which holds for every ${\tau\in D+\Z}$. \qed
\end{proof}

\subsection{A Modular Unit}
\label{ssmod}

In this subsection we briefly recall the ``modular unit'' construction. See \cite[Section~3]{BP11} for more details. 

Let~$N$ be a positive integer. Then for ${\bfa,\bfa'\in (N^{-1}\Z)^2\smallsetminus\Z^2}$ such that~$\bfa \equiv \bfa' \mod \Z^2$, we have~${g_\bfa^{12N}=
g_{\bfa'}^{12N}}$.  Hence the function~$g_\bfa^{12N}$ is well-defined for~$\bfa$ in ${\left(N^{-1}\Z/\Z\right)^2\smallsetminus\{0\}}$.  
The function ${u_\bfa=g_\bfa^{12N}}$ is $\Gamma(N)$-automorphic and hence defines a rational function on the modular curve $X(N)(\C )$; in fact, it 
belongs to the field $\Q(\zeta_N)\bigl(X(N)\bigr)$.

Now assume that ${N=p\ge 3}$ is an odd prime number,  and denote by $p^{-1}\F_p^\times$ the set of non-zero elements of ${p^{-1}\Z/\Z}$. Put 
$$
A= \bigl\{(a,0) : a\in p^{-1}\F_p^\times\bigr\}\cup \bigl\{(0,a) : a\in p^{-1}\F_p^\times\bigr\}, \qquad
U=\prod_{\bfa\in A}u_\bfa.
$$
Then~$U$ is $\Gamma_\spl(p)$-automorphic; in particular, it defines a rational function on $X_\spl (p)$, also denoted by~$U$; in fact, ${U\in \Q(X_\spl (p))}$.  

More generally, for ${c\in \Z}$ put 
$$
\beta_c=\begin{pmatrix}1&0\\c&1\end{pmatrix}, \qquad U_c =U\circ \beta_c= \prod_{\bfa\in A\beta_c}u_\bfa
$$
(recall that~$u_\bfa \circ \gamma =u_{\bfa \gamma}$), so that ${U=U_0}$.  (Warning: for~$c$ non-divisible by~$p$ the function~$U_c$ is not 
$\Gamma_\spl (p)$-automorphic!) The following is a quantitative version of Proposition~3.3 from~\cite{BP11}.

\begin{proposition}
\label{pu}
For 
${\tau\in D+\Z}$   we have 
$$
\log \left|U_c(\tau)\right| 
= \left\{
\begin{aligned} 
&(p-1)^2\log |q|+O_1\left( 4\pi^2 \frac{p^2}{\log |q^{-1}|} + 12p\log p +77p^2|q|\right) & {\mathrm{if}}\ p\mid c,\\
&-2(p-1)\log |q|+O_1\left( 8\pi^2 \frac{p^2}{\log |q^{-1}|} +72p^2|q|\right) &{\mathrm{if}}\ p\nmid c,
\end{aligned}
\right.
$$
where we write ${q=q(\tau)}$. 
\end{proposition}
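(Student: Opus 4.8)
The plan is to reduce the computation of $\log|U_c(\tau)|$ to repeated applications of Proposition~\ref{pga}, carefully tracking the arithmetic of the shifted index set $A\beta_c$ and bounding the two error terms that arise: one from the sum of the ``principal parts'' $\frac12 B_2(a_1)\log|q|$ over the new index set, and one from accumulating the $O_1(3|q|)$ terms together with the $\log|1-q^{a_1}e^{2\pi ia_2}|$ correction terms that are not absorbed into $\log|q|$. First I would write $U_c=\prod_{\bfa\in A\beta_c}u_\bfa = \prod_{\bfa\in A\beta_c}g_\bfa^{12p}$, so that $\log|U_c(\tau)| = 12p\sum_{\bfa\in A\beta_c}\log|g_\bfa(\tau)|$. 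The set $A$ consists of the $p-1$ elements $(a,0)$ and the $p-1$ elements $(0,a)$ with $a\in p^{-1}\F_p^\times$; applying $\beta_c=\begin{pmatrix}1&0\\c&1\end{pmatrix}$ on the right sends $(a,0)\mapsto(a,0)$ and $(0,a)\mapsto(ca,a)$. So when $p\mid c$ one has $A\beta_c\equiv A\bmod\Z^2$ and $U_c=U$ up to the $12p$-th power ambiguity, while for $p\nmid c$ the second family becomes $\{(ca\bmod 1,\,a):a\in p^{-1}\F_p^\times\}$, which as $a$ ranges over $p^{-1}\F_p^\times$ has first coordinate ranging over all of $p^{-1}\F_p^\times$ as well (since $c$ is invertible mod $p$).

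Next I would compute the leading term. For each $\bfa=(a_1,a_2)$ with $0\le a_1<1$, Proposition~\ref{pga} gives $\log|g_\bfa(\tau)| = \frac12 B_2(a_1)\log|q| + \varepsilon_\bfa$, where $\varepsilon_\bfa = \log|1-q^{a_1}e^{2\pi ia_2}| + \log|1-q^{1-a_1}e^{-2\pi ia_2}| + O_1(3|q|)$. Summing $\frac12 B_2(a_1)$ over the relevant index sets and multiplying by $12p$ produces the main term. For the family $(a,0)$, $a$ runs over $1/p,2/p,\dots,(p-1)/p$, so $\sum \frac12 B_2(a/p) = \frac12\sum_{a=1}^{p-1}(a^2/p^2 - a/p + 1/6)$; using $\sum_{a=1}^{p-1}a = p(p-1)/2$ and $\sum_{a=1}^{p-1}a^2 = (p-1)p(2p-1)/6$ this evaluates to something of size $O(p)$. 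For the family contributing first coordinates: when $p\mid c$ it is again $a/p$ for $a=1,\dots,p-1$; when $p\nmid c$ the first coordinate is $\{ca\bmod 1\}$ which still ranges over $\{1/p,\dots,(p-1)/p\}$ as a set — so the Bernoulli sum is the same. At first glance this seems to give the same leading coefficient in both cases, which contradicts the statement; the resolution, which I expect is the crux, is that $B_2$ must be evaluated on the \emph{representative in $[0,1)$} of each class, and although $\frac12 B_2(a_1)$-sums over the first coordinates agree, the point is that the genuinely $q$-dependent terms $\log|1-q^{a_1}e^{2\pi ia_2}|$ behave completely differently: when $a_1=0$ (which happens only in the $p\mid c$ case, for the family $(0,a)$) one has a term $\log|1-q^0 e^{2\pi ia_2}| = \log|1-e^{2\pi ia_2}|$ with $a_2 = a/p$, whereas when $a_1\ne 0$ the term $\log|1-q^{a_1}e^{2\pi i a_2}|$ is itself $O(|q|^{a_1})$, hence small but not summable into $\log|q|$ in a naive way. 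So I would separate the index set according to whether $a_1=0$, handle the $a_1=0$ terms (only in the $p\mid c$ case) via the exact product formula for $g_{(0,a_2)}$, and for the $a_1\ne 0$ terms collect $\sum \log|1-q^{a_1}e^{2\pi ia_2}|$ into the error using $|\log|1-w||\le \frac{-\log(1-|w|)}{|w|}|w|$ from~(\ref{eloglog}) and summing the geometric-type series $\sum_{a=1}^{p-1}|q|^{a/p}\le |q|^{1/p}/(1-|q|^{1/p})\approx \frac{1}{\log|q^{-1}|/p} = p/\log|q^{-1}|$, which is the origin of the $p^2/\log|q^{-1}|$ terms after multiplying by $12p$ and by a constant like $4\pi^2$ or $8\pi^2$ coming from more careful bookkeeping of the two product factors.

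Finally I would assemble the error term. The $O_1(3|q|)$ in Proposition~\ref{pga}, summed over the $\le 2(p-1)$ or $\le 2(p-1)$ indices (with multiplicity $12p$) contributes $O_1(72 p^2|q|)$ or so; the extra $5p^2|q|$ and $12p\log p$ in the $p\mid c$ case come from the $a_1=0$ terms, where $\log|1-e^{2\pi ia/p}|$ for $a=1,\dots,p-1$ must be summed — this is $\log\prod_{a=1}^{p-1}|1-\zeta_p^a| = \log p$, contributing $12p\log p$ after the $12p$ factor — plus the contribution of $\log|1-q^{1}e^{-2\pi ia/p}|$ which is $O(|q|)$ per term hence $O(p|q|)$, and some slack. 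So the structure is: (i) leading term from the Bernoulli sums, (ii) $p^2/\log|q^{-1}|$ terms from summing $\sum|q|^{a/p}$ type series for the near-$a_1\to 0$ and near-$a_1\to 1$ factors, (iii) a $12p\log p$ term present only when $p\mid c$ from the cyclotomic-unit identity $\prod_{a=1}^{p-1}(1-\zeta_p^a)=p$, and (iv) $O(p^2|q|)$ from the genuinely higher-order tails. The main obstacle is step~(ii): one has to be scrupulous about the crossover region where $a_1$ is close to $0$ or close to $1$, because there $|q|^{a_1}$ is not small, and the bound~(\ref{eloglog}) has the $-\log(1-r)/r$ prefactor blow up; controlling this uniformly in $p$ and in $\tau\in D+\Z$ (where only $|q|\le e^{-\pi\sqrt3}$ is guaranteed) is where the $4\pi^2$ and $8\pi^2$ constants are pinned down, and getting them exactly right — rather than merely $O(p^2/\log|q^{-1}|)$ — requires the careful telescoping of $\sum_{a}\log|1-q^{a/p}e^{2\pi i a_2}|$ against $\sum_a q^{a/p}$ and then $\sum_a \frac{(q^{a/p})^k}{k}$ over $k\ge 1$. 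Once these are in hand, multiplying through by $12p$ and collecting gives exactly the two displayed formulas. \qed
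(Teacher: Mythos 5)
Your overall architecture is the paper's: expand $\log|U_c|$ via Proposition~\ref{pga}, separate a Bernoulli main term $\Sigma_1$ from a correction term $\Sigma_2$, use the cyclotomic identity $\prod_{k=1}^{p-1}(1-\zeta_p^k)=p$ for the $a_1=0$ indices, and control the $\sum_k\log|1-q^{k/p}\cdots|$ sums via the $\pi^2/6$-type bound of Lemma~\ref{llogz} to get the $p^2/\log|q^{-1}|$ errors. But there is a concrete arithmetic error in your treatment of the main term that the rest of the proposal tries, unsuccessfully, to compensate for.

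When $p\mid c$, the second family $(0,a)$ is fixed by $\beta_c$, so its first coordinate is $\tilde a_1=0$, not $a/p$ as you assert in the sentence ``when $p\mid c$ it is again $a/p$ for $a=1,\dots,p-1$.'' (You yourself contradict this two sentences later when you correctly observe that $a_1=0$ occurs in the $p\mid c$ case.) Consequently the two Bernoulli sums do \emph{not} agree:
$$
\Sigma_1^{p\mid c}=\sum_{k=1}^{p-1}B_2\!\left(\tfrac kp\right)+(p-1)B_2(0)=\frac{(p-1)^2}{6p},
\qquad
\Sigma_1^{p\nmid c}=2\sum_{k=1}^{p-1}B_2\!\left(\tfrac kp\right)=-\frac{p-1}{3p},
$$
and multiplying by $6p\log|q|$ already gives the two distinct leading terms $(p-1)^2\log|q|$ and $-2(p-1)\log|q|$. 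The discrepancy you notice (``at first glance this seems to give the same leading coefficient\dots which contradicts the statement'') is therefore resolved at the Bernoulli-sum level, by the extra $(p-1)B_2(0)=(p-1)/6$, not by the $\log|1-q^{a_1}e^{2\pi ia_2}|$ corrections. Those corrections only produce the error terms: the $12p\log p$ indeed comes from $\sum_k\log|1-\zeta_p^k|=\log p$, but it lives in the $O_1$-bracket, not in the leading coefficient; and the difference between the $4\pi^2$ and $8\pi^2$ prefactors is simply that for $p\nmid c$ Lemma~\ref{llogz} must be applied to both $z=q^{1/p}$ and $z=q^{1/p}e^{2\pi ib/p}$, doubling the bound, whereas for $p\mid c$ only $z=q^{1/p}$ occurs (the $a_1=0$ indices contribute the exact $\log p$ plus a tiny $\log|\tfrac{1-q^p}{1-q}|=O(|q|)$, which, together with the accumulated $O_1(3|q|)$ of Proposition~\ref{pga}, gives the slightly larger $77p^2|q|$). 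Once the Bernoulli accounting is corrected, your plan matches the paper's proof.
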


For the proof of Proposition~\ref{pu}  we need a slight sharpening of Lemma~3.5 from~\cite{BP11}.

\begin{lemma}
\label{llogz}
Let~$z$ be a complex number, ${|z|<1}$, and~$N$ a positive integer. Then  
\begin{equation}
\label{eeta}
\left|\sum_{k=1}^N\log\bigl|1-z^k\bigr| \right|\le \frac{\pi^2}6\frac1{\log |z^{-1}|}.
\end{equation}
\end{lemma}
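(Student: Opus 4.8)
The plan is to bound the sum $\sum_{k=1}^N \log|1-z^k|$ by first passing to absolute values term-by-term and then exploiting the geometric decay of $|z|^k$. Write $r = |z|$, so $r < 1$. Using the elementary inequality $\bigl|\log|1-w|\bigr| \le -\log(1-|w|)$ valid for $|w|<1$ — which follows from $\log(1-|w|) \le \log|1-w| \le \log(1+|w|) \le -\log(1-|w|)$ — we get
\[
\left|\sum_{k=1}^N \log\bigl|1-z^k\bigr|\right| \le \sum_{k=1}^N \bigl|\log|1-z^k|\bigr| \le -\sum_{k=1}^N \log(1-r^k) = \sum_{k=1}^N \sum_{m=1}^\infty \frac{r^{km}}{m}.
\]
Extending the outer sum to $\infty$ and interchanging the order of summation (everything is nonnegative, so this is legitimate), this is
\[
\sum_{m=1}^\infty \frac1m \sum_{k=1}^\infty r^{km} = \sum_{m=1}^\infty \frac1m \cdot \frac{r^m}{1-r^m}.
\]

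The remaining task is to show $\sum_{m=1}^\infty \frac{1}{m}\cdot\frac{r^m}{1-r^m} \le \frac{\pi^2}{6}\cdot\frac{1}{\log(1/r)}$. The key estimate is the elementary bound $1-r^m \ge m(1-r)\,r^{m-1} \ge m(1-r)\,r^m$... but that is too lossy; instead one should use $\frac{r^m}{1-r^m} \le \frac{1}{m}\cdot\frac{1}{\log(1/r)}$ for each $m$. Indeed, setting $t = m\log(1/r) > 0$, this inequality reads $\frac{1}{e^t - 1} \le \frac{1}{t}$, i.e. $e^t - 1 \ge t$, which is the standard convexity bound for the exponential. Plugging this in gives
\[
\sum_{m=1}^\infty \frac1m \cdot \frac{r^m}{1-r^m} \le \sum_{m=1}^\infty \frac{1}{m^2}\cdot\frac{1}{\log(1/r)} = \frac{\pi^2}{6}\cdot\frac{1}{\log|z^{-1}|},
\]
which is exactly~\eqref{eeta}, and in fact the $N$-truncated sum is bounded by the same quantity a fortiori.

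I do not anticipate a genuine obstacle here: the proof is a chain of three elementary inequalities ($\bigl|\log|1-w|\bigr| \le -\log(1-|w|)$, then $-\log(1-x) = \sum x^m/m$, then $e^t-1 \ge t$) together with Fubini for nonnegative terms and the evaluation $\zeta(2) = \pi^2/6$. The only point requiring a little care is making sure the double sum rearrangement is justified and that the truncation at $N$ only helps; both are immediate since all terms are nonnegative. If one wanted to avoid the interchange of summation altogether, one could alternatively bound each term directly by $\bigl|\log|1-z^k|\bigr| \le -\log(1-r^k) \le \frac{1}{k}\cdot\frac{1}{\log(1/r)}$ (combining the first and third inequalities above with $m=1$ replaced by the role of $k$), and then sum $\sum_{k\ge 1} k^{-2}$ directly — this is cleaner and is probably the form I would actually write up.
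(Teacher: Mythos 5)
Your main argument is correct and takes a genuinely different, more elementary route than the paper. The paper reduces to real $q\in(0,1)$ via $\bigl|\log|1-w|\bigr|\le -\log(1-|w|)$, then splits into two cases: for $0<q\le 1/2$ a crude bound $-\sum_k\log(1-q^k)\le 4(\log 2)\,q$ already suffices, while for $1/2\le q<1$ it invokes the modular transformation $|\eta(\tau)|=|\tau|^{-1/2}|\eta(-1/\tau)|$ of the Dedekind $\eta$-function, identifies $(\pi^2/6)/\log(q^{-1})$ as the leading term of the transformed $q$-expansion, and checks that all remaining terms are negative. Your proof produces the same constant directly from $\zeta(2)$ after a Tonelli interchange of a nonnegative double series together with the one-line inequality $1/(e^t-1)\le 1/t$; it avoids both the case split and the modular forms machinery, and it makes the source of the constant $\pi^2/6$ transparent.

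The ``cleaner alternative'' you sketch in the last sentence is, however, flawed. The per-term estimate $-\log(1-r^k)\le \frac{1}{k}\cdot\frac{1}{\log(1/r)}$ is true (via $-\log(1-x)\le x/(1-x)$ and then $1/(e^t-1)\le 1/t$ with $t=k\log(1/r)$), but summing it over $k$ yields $\frac{1}{\log(1/r)}\sum_{k\ge 1}\frac1k$, the divergent harmonic series --- not $\sum_k k^{-2}$ as you wrote. The exponent $2$ in your main proof comes from \emph{two} separate factors of $1/m$: one already present in $-\log(1-r^k)=\sum_m r^{km}/m$, and a second from $1/(e^{m\log(1/r)}-1)\le 1/(m\log(1/r))$ applied after the interchange. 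Collapsing to a single sum over $k$ discards one of these factors; and indeed the stronger bound $-\log(1-r^k)\le\frac{1}{k^2\log(1/r)}$ that the collapsed version would need is simply false (try $k=10$, $r=e^{-1/10}$, where the left side is about $0.46$ and the right side is $0.1$). Keep the double-sum version; that is the one that works.
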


\begin{proof}
We have 
${\bigl|\log|1+z|\bigr|\le -\log(1-|z|)}$
for ${|z|<1}$. 
Hence it suffices to prove the inequality
\begin{equation}
\label{eqq}
-\sum_{k=1}^\infty\log(1-q^k) \le \frac{\pi^2}6\frac1{\log (q^{-1})}  \qquad ({\mathrm{for}}\ 0<q<1).
\end{equation} 
Using~\eqref{eloglog} with~$q$ instead of~$z$ and with ${r=1/2}$, we 
 find that for ${0<q\le 1/2}$
$$
-\sum_{k=1}^\infty\log(1-q^k) \le (4\log 2)  q \le \frac{4\log 2}e \frac1{\log (q^{-1})}  < \frac{\pi^2}6\frac1{\log (q^{-1})},
$$
which proves~\eqref{eqq} for ${0<q\le 1/2}$. We are left with ${1/2\le q<1}$. 

Put ${\tau = \log q/(2\pi i)}$. Then 
$$
-\sum_{k=1}^\infty\log(1-q^k) = \frac1{24}\log q -\log|\eta(\tau)|,
$$
where $\eta(\tau)$ is the Dedekind $\eta$-function. Since ${|\eta(\tau)|=|\tau|^{-1/2}|\eta(-\tau^{-1})|}$, we have 
\begin{equation}
\label{eqeta}
-\sum_{k=1}^\infty\log(1-q^k) = -\frac1{24}\log Q+\frac1{24}\log q+\frac12\log|\tau| -\sum_{k=1}^\infty\log(1-Q^k)
\end{equation}
with ${Q=e^{-2\pi i\tau^{-1}}= e^{4\pi^2/\log q}}$. The first term on the right of~(\ref{eqeta}) is exactly ${(\pi^2/6)/\log(q^{-1})}$, and the 
second term is negative for ${0<q<1}$. To complete the proof, we must show that, when ${1/2\le q<1}$, the sum of the remaining two terms is 
negative. 

Indeed, when ${1/2\le q<1}$, we have 
$$
\frac12\log|\tau| \le -\frac12\log \frac{2\pi}{\log 2} \le -1, \qquad Q \le e^{-4\pi^2/\log 2} \le 10^{-24}.
$$
Applying~\eqref{eloglog} with~$Q$ instead of~$z$ and with ${r=10^{-24}}$, we bound the fourth term in~\eqref{eqeta} by ${10^{-23}}$. Hence the 
sum of the third and the fourth terms is negative, as wanted.\qed
\end{proof}

\paragraph{Proof of Proposition~\ref{pu}}
For ${a\in \Q/\Z}$ we denote by~$\tilde a$ the lifting of~$a$ to the interval $[0,1)$. 
Then 
for ${\tau\in D+\Z}$  we deduce from Proposition~\ref{pga} that 
\begin{equation}
\label{ewa}
\log \left|U_c(\tau)\right|=6p\Sigma_1\log |q| +12p\Sigma_2+ O_1(72p^2|q|),  
\end{equation}
where
$$
\Sigma_1=\sum_{\bfa\in A\beta_c}B_2(\tilde a_1),\qquad
\Sigma_2=\sum_{\bfa\in A\beta_c}\Bigl( \log\bigl|1-q^{\tilde a_1}e^{2\pi ia_2}\bigr|+ \log\bigl|1-q^{1-\tilde a_1}e^{-2\pi ia_2}\bigr|\Bigr).
$$
Now we are going to calculate~$\Sigma_1$, using the identity
$$
\sum_{k=1}^{N-1}B_2\left(\frac kN\right) =-\frac{(N-1)}{6N},
$$ 
and to estimate $\Sigma_2$ using  Lemma~\ref{llogz}.

If  ${p\mid c}$ then~${A\beta_c=A}$ and 
\begin{align}
\label{ebern}
\Sigma_1&= \sum_{k=1}^{p-1}B_2\left(\frac kp\right)+ (p-1)B_2(0)= \frac{(p-1)^2}{6p},\\
\label{esumlo}
\Sigma_2&= 2\sum_{k=1}^{p-1}\log\bigl|1-q^{k/p}\bigr| +\log\left|\frac{1-q^p}{1-q}\right|+\log p.
\end{align}
Lemma~\ref{llogz} with ${z=q^{1/p}}$ implies that 
$$
\Bigl| \sum_{k=1}^{p-1}\log |1-q^{k/p} | \Bigr| \le \frac{\pi^2}6\frac p{\log |q^{-1}|}.
$$
 Also, since ${|q|\le e^{-\pi\sqrt3}}$, we have ${\Bigl| \log\left|1-q\right|  \Bigl| \le 1.01|q|}$ and
${\Bigl| \log\left|1-q^p\right|  \Bigl| \le 1.01 |q|^p\le 0.01|q|}$.
Combining all this with~(\ref{ewa}),~(\ref{ebern}) and~(\ref{esumlo}), we  prove the proposition in the case ${p\mid c}$.

If ${p\nmid c}$
then
${A\beta_c=\{(a,0) : a\in p^{-1}\F_p^\times\}\cup\{(a,ab) : a\in p^{-1}\F_p^\times\}}$, 
where  ${bc\equiv 1\bmod p}$. Hence 
\begin{align*}
\Sigma_1=& 2\sum_{k=1}^{p-1}B_2\left(\frac kp\right)=-\frac{p-1}{3p}, \\
\Sigma_2=& 
2\sum_{k=1}^{p-1}\log\bigl|1-q^{k/p}\bigr|+ 2\sum_{k=1}^{p-1}\log\bigl|1-(q^{1/p}e^{2\pi ib/p})^k\bigr|.
\end{align*}
Using Lemma~\ref{llogz} with ${z=q^{1/p}}$ and with ${z= q^{1/p}e^{2\pi ib/p}}$, we  complete the proof.   \qed

\subsection{Proof of Theorem~\ref{th1}}
\label{Sth1}
We set~$G$ as the subgroup of diagonal and anti-diagonal matrices in~$\GL_2 (\F_p)$ and choose the corresponding modular
curve as a model for~$X_\spl (p)$. Define the ``modular units''~$U_c$ as in Subsection~\ref{ssmod}.  Recall that ${U=U_0}$ 
belongs to the field $\Q(X_\spl (p))$. Theorem~\ref{th1} is a consequence of Proposition~\ref{pu} and the following statement, which 
is Proposition~4.2 from~\cite{BP11}.

\begin{proposition} 
\label{pari}
For  ${P\in Y_\spl (p)(\Z)}$ we have ${0\le \log|U(P)| \le 24 p\log p}$. \qed
\end{proposition}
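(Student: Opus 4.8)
The plan is to bound $\log|U(P)|$ for an integral point $P\in Y_\spl(p)(\Z)$ by combining the fact that $U$ is a \emph{unit} on $Y_\spl(p)$ (in a suitable integral sense) with an estimate for its zero and pole orders at the cusps. First I would recall that $U=\prod_{\bfa\in A}u_\bfa$ where each $u_\bfa=g_\bfa^{12p}$ is a modular unit, and that the product is taken over a $\Gamma_\spl(p)$-stable set $A$, so $U\in\Q(X_\spl(p))$ and, more precisely, $U$ and $U^{-1}$ are regular on $Y_\spl(p)$; consequently $U(P)$ is a nonzero rational number which, by the Deligne--Rapoport integrality of the model (and the fact that $u_\bfa$ extends to a nowhere-vanishing section away from the cusps over $\Z[\zeta_p]$, with $p$ inverted handled via the $12p$-th power), is in fact a unit in $\Z$, i.e.\ $U(P)=\pm1$. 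Wait --- that would give $\log|U(P)|=0$, so the content of the proposition is really the opposite inequality direction packaged differently; the correct reading is that $\log|U(P)|\ge 0$ comes from $U(P)$ being a nonzero integer (hence $|U(P)|\ge 1$), while $\log|U(P)|\le 24p\log p$ is the substantive bound.

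For the lower bound $0\le\log|U(P)|$: since $U$ is regular on $Y_\spl(p)$ and $P$ reduces to a point in the smooth locus $Y_\spl(p)(\Z)$, the value $U(P)$ lies in $\Z$; since $U$ is invertible on $Y_\spl(p)$ (its divisor is supported on the cusps, as it is a product of modular units), $U(P)\ne 0$ and moreover $U(P)\in\Z^\times=\{\pm1\}$ — actually one only needs $|U(P)|\ge 1$, i.e.\ $U(P)$ a nonzero integer, which follows from regularity alone. Hence $\log|U(P)|\ge 0$. I should be careful here to justify that $U$, a priori only a rational function with $q$-expansion coefficients in $\Z[\zeta_p]$, descends to something with $\Z$-integral (indeed invertible) values on $Y_\spl(p)$ over $\Z$; this is exactly the ``modular unit'' formalism recalled from \cite[Section~3]{BP11}, and I would cite it rather than reprove it.

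For the upper bound: the idea is that $\log|U(P)|$ is controlled by the $q$-expansion of $U$ at the cusp $P$ specializes near, or more robustly, by a global argument. The cleanest route is: $\log|U(P)|\le \sum_{v}\max(0,\log|U|_v(P))$ summed over archimedean places, but since $P$ is rational there is one archimedean place; then one uses that $U$ as a function on $X_\spl(p)$ has poles only at cusps, with total polar degree equal to $\sum$ of pole orders, and the height of $U(P)$ is at most $(\deg_\infty U)\cdot h(P) + O(1)$ --- but this reintroduces $h(P)$, which is circular. The actual argument in \cite{BP11} must instead bound $\log|U(P)|$ \emph{directly} via the product formula and the explicit $q$-expansion: $U$ has leading $q$-exponent $(p-1)^2$ at the cusp $\infty$ (from Proposition~\ref{pu} with $p\mid c$), hence as a modular unit its divisor has degree $\asymp p^2$ over a set of cusps whose number is $\asymp p$, and the key point is that the \emph{logarithmic} contribution of the leading coefficient $-q^{B_2(\cdot)/2}e^{\pi i a_2(a_1-1)}$ across all $\bfa\in A$, raised to the $12p$, produces a factor whose $p$-adic and archimedean sizes are governed by products of $(1-\zeta_p^k)$ and by $p$-powers. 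Quantitatively, $\prod_{k=1}^{p-1}(1-\zeta_p^k)=p$, so the ``constant term'' of $U$ contributes $\log$ of a power of $p$ of exponent $O(p)$, and the $12p$-th power and the $O(p)$ cusps multiply this to $O(p\log p)$ with explicit constant $24$.

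The main obstacle, then, is the bookkeeping in the upper bound: one must identify $U(P)$ with (a power of $p$ times a unit) or more precisely bound $|U(P)|$ by tracking, over each place, the contribution of the leading coefficients $-q^{B_2(a_1)/2}e^{\pi i a_2(a_1-1)}$ of the Siegel functions --- these are roots of unity times fractional powers of $q$, and after taking the $12p$-th power and the product over $\bfa\in A$ the $q$-powers assemble (using $\sum B_2(k/p)=-(p-1)/(6p)$ and $B_2(0)=1/6$) while the root-of-unity/norm factors assemble into $\pm p^{m}$ with $m=O(p)$; the archimedean estimate $|U(P)|\le$ something then follows from the fact that on $Y_\spl(p)$ the $q$-expansion near \emph{any} cusp has leading behaviour bounded by Proposition~\ref{pu}, and integrality forces $P$ to reduce away from cusps so the ``$\log|q|$'' terms cannot blow up. I would expect to cite Proposition~4.2 of \cite{BP11} essentially verbatim for this, as the proposition statement already does (``which is Proposition~4.2 from~\cite{BP11}''), and the honest plan is: state the two inequalities, prove $\ge 0$ from the modular-unit integrality in two lines, and for $\le 24p\log p$ reduce to the $q$-expansion leading-coefficient computation, flagging that the constant $24=12\cdot 2$ comes from the $12p$-th power in $u_\bfa=g_\bfa^{12p}$ combined with the $\le 2(p-1)$-size support/order estimate and $\prod(1-\zeta_p^k)=p$.
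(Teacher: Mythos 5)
The present paper does not prove this proposition---it imports it verbatim as Proposition~4.2 of \cite{BP11} and places the \qed directly on the statement---so the comparison must be against the argument in \cite{BP11}. Your lower bound is essentially correct once the false start is dropped: the claim $U(P)\in\Z^\times=\{\pm1\}$ is wrong (if it were true the proposition would be vacuous), but what holds is that $U(P)$ is a nonzero rational integer, hence $|U(P)|\ge 1$. Integrality comes from the $q$-expansions of the $u_\bfa$ having coefficients in $\Z[\zeta_p]$ together with the $\Gamma_\spl(p)$-invariance and $\Q$-rationality of $U$, which force its $q$-expansion coefficients into $\Z[\zeta_p]\cap\Q=\Z$, combined with regularity on the Deligne--Rapoport model; nonvanishing is because the divisor of $U$ on the generic fiber is cuspidal and $P$ is not a cusp.

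The upper bound is where the proposal has a genuine gap. You correctly observe that a naive archimedean estimate would reintroduce $\height(P)$ and be circular, but the alternative you sketch---tracking leading $q$-expansion coefficients, $\prod_{k}(1-\zeta_p^k)=p$, ``$24=12\cdot 2$''---never becomes an argument, and your fallback, ``cite Proposition~4.2 of \cite{BP11} essentially verbatim,'' is circular since that \emph{is} the statement to be proved. The missing idea is that the content of the upper bound lives at the finite places, and in fact at $p$ alone: because the divisor of $U$ is supported on the cusps, $U$ is a unit in the ring of regular functions on $Y_\spl(p)_{\Z[1/p]}$, so $U(P)\in\Z[1/p]^\times$, i.e.\ $U(P)=\pm p^m$ for some $m\in\Z$; combined with $U(P)\in\Z$ this gives $m\ge 0$, and the substantive estimate is the bound $m=\ord_p U(P)\le 24p$. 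In \cite{BP11} this is obtained by producing a second integral-valued modular unit whose product with $U$ is an explicit power of $p$ of exponent at most $24p$ (equivalently, by bounding the order of $U$ along the vertical components of $Y_\spl(p)_\Z$ above $p$). Your proposal names the relevant numerology ($|A|=2(p-1)$, the $12p$-th power, $\prod_k(1-\zeta_p^k)=p$) but never reduces the upper bound to, nor establishes, this bound on $\ord_p U(P)$; without that step the proof is not there.
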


We are ready now to prove Theorem~\ref{th1}. Let~$p\ge 3$ and~${P\in Y_\spl (p)(\Z)}$. According to Lemma~3.2 from~\cite{BP11}, 
there exists ${\tau\in D+\Z}$ and ${c\in \Z}$ with ${U_c(\tau)=U(P)}$ and ${j(\tau)=j(P)}$.  We write ${q=q(\tau)}$. Recall that~$j(\tau )$ 
and~$q(\tau )$ are real numbers, and that $\height (j(\tau ))=\log \vert j(\tau )\vert$ if~$j(\tau )\in \Z$. It suffices to show that 
\begin{equation}
\label{eqmione}
\log|q^{-1}|\le 2\pi p^{1/2}+ 6\log p + {20 (\log p)^2}{p^{-1/2}}.
\end{equation}
Indeed, we may assume that ${|j(\tau)|\ge 3500}$ (otherwise~\eqref{erunge} holds trivially), in which case Corollary~2.2 of~\cite{BP10}  
gives ${\bigl|j(\tau)-q^{-1}\bigr|\le 1100}$. Hence, using the inequality
$$
\log |a|\le \log |b|+ \frac{|a-b|}{|a|-|a-b|},
$$
which holds for real numbers~$a$ and~$b$ with same sign (and $0<|b|<|a|$ or $0<|a|<|b|<|2a|$), we obtain
$$
\log|j(\tau)|\le \log|q^{-1}|+ \frac{1100}{|j(\tau)|-1100}.
$$
Now using~\eqref{eqmione} and assuming that ${\log|j(\tau)|\ge 2\pi p^{1/2}+ 6\log p}$, we obtain 
$$
\log|j(\tau)|\le 2\pi p^{1/2}+ 6\log p + 20\frac{(\log p)^2}{p^{1/2}} + \frac{1100}{p^6e^{2\pi p^{1/2}}-1100} \le 2\pi p^{1/2}+ 6\log p + 
21\frac{(\log p)^2}{p^{1/2}},
$$
as wanted.

Let us prove~\eqref{eqmione}. 
Assume first that
 ${p\nmid c}$. Using Propositions~\ref{pu} and~\ref{pari} and assuming that ${\log|q^{-1}|\ge 2\pi p^{1/2}+6\log p}$, we obtain
\begin{align*}
\log|q^{-1}| &\le \frac{\log |U_c(\tau)|}{2(p-1)} + \frac{4\pi^2p^2}{p-1}\frac1{\log |q^{-1}|} +36\frac{p^2}{p-1}|q|\\
&\le\frac{12p\log p}{p-1}+ \frac{4\pi^2p}{\log |q^{-1}|} + \frac{4\pi^2p}{p-1}\frac1{\log |q^{-1}|}+54p|q|\\
&\le 12\log p+ \frac{12\log p}{p-1}+\frac{4\pi^2p}{\log |q^{-1}|}+ \frac{2\pi p^{1/2}}{p-1}+ 54p^{-5}e^{-2\pi p^{1/2}}\\
&\le 12\log p + \frac{4\pi^2p}{\log |q^{-1}|}+\frac{21}{p^{1/2}}.
\end{align*}
It follows that ${\log|q^{-1}|}$ does not exceed the largest root of the quadratic polynomial
$$
f(T)=T^2- \left(12\log p +{21}{p^{-1/2}}\right)T- 4\pi^2 p,
$$
that is,
\begin{align}
\log|q^{-1}|&\le \left(4\pi^2 p+ \left(6\log p +{10.5}{p^{-1/2}}\right)^2\right)^{1/2}+6\log p+ 10.5p^{-1/2}\nonumber\\
\label{emidd}
&\le 2\pi p^{1/2} + \frac{\left(6\log p +{10.5}{p^{-1/2}}\right)^2}{4\pi p^{1/2}}+6\log p+ 10.5p^{-1/2}\\
&\le 2\pi p^{1/2}+6\log p + {20(\log p)^2}{p^{-1/2}}, \nonumber
\end{align}
where we use the inequality ${(a+b)^{1/2}\le a^{1/2}+(1/2) ba^{-1/2}}$ in~\eqref{emidd}. This completes the proof 
of~\eqref{eqmione} in the case ${p\nmid c}$.

In the case ${p\mid c}$ Proposition~\ref{pu} gives
$$
\log|q^{-1}| \le -\frac{\log |U_c(\tau)|}{(p-1)^2} + \frac{4\pi^2 p^2}{(p-1)^2}\frac1{\log|q^{-1}|}+ \frac{12p\log p}{(p-1)^2} + \frac{77 p^2}{(p-1)^2}|q|.
$$
Proposition~\ref{pari} implies that  ${-\log |U_c(\tau)|\le 0}$. Assuming that ${\log|q^{-1}|\ge 2\pi p^{1/2}+6\log p}$, we obtain
$$
\log|q^{-1}| \le  \frac{2\pi p^{3/2}}{(p-1)^2}+ \frac{12p\log p}{(p-1)^2} + \frac{77}{(p-1)^2p^4}e^{-2\pi p^{1/2}}\le 19,
$$
which is sharper than~\eqref{eqmione}. The theorem is proved.  
\qed
\subsection{Integral points on $X_0(p^r)$: proof of Theorem~\ref{tx0}}
\label{ssx0pr}
Let ${p}$ be a prime number as usual. 
We will use the following double-covering of $X_\spl (p)$. Let denote by $X_\splic(p)$ the curve  corresponding to a split Cartan subgroup of $\GL_2(\F_p)$ ({\it not} its 
normalizer),  for instance the diagonal subgroup (see  the beginning of  Section~\ref{Runge}). 
It parametrizes geometric isomorphism 
classes  of elliptic curves endowed with an \textsl{ordered} pair of independent $p$-isogenies. Factorizing by the natural involution that switches the isogenies (which 
is induced by the matrix $\left(\begin{smallmatrix}0&1\\-1&0\end{smallmatrix}\right)$ acting on the Poincar\'e half-plane~$\HH$) defines a degree-2 covering ${X_\splic(p)
\to X_\spl(p)}$. On the other hand, there is an isomorphism ${\phi: X_{0} (p^2 )\to X_\splic(p)}$ over~$\Q$ defined functorially as
\begin{equation}
\label{ex0spc}
\bigl(E,A\bigr)\mapsto \bigl(E/B , (B^\ast,C)\bigr),
\end{equation}
where ${A=C\circ B}$ is the obvious decomposition of the cyclic $p^2$-isogeny~$A$ into the product of two $p$-isogenies and~$B^*$ is the dual isogeny. On the 
Poincar\'e upper half-plane~$\HH$, the map~$\phi$ is induced by~${\tau \mapsto p\tau}$. 

This interplay between the isomorphic  curves might look a bit confusing 
at first sight, but each point of view has its own advantages. In particular, replacing~$X_{0} (p^2 )$ by $X_\splic(p)$ (that is, a level~$p^2$-structure by a~$p$-structure) 
is significantly more advantageous for  Runge's method.

Furthermore, curves $X_0^+(p^2)$ and $X_\spl(p)$ are quotients of $X_0(p^2)$ and $X_\splic(p)$, respectively, by natural involutions, and a straightforward verification shows that~\eqref{ex0spc} defines a $\Q$-isomorphism ${X_{0}^+ (p^2 )\to X_\spl(p)}$.

We deduce Theorem~\ref{tx0} from the following result, which is  Theorem~6.1 from~\cite{BP10}. 

\begin{theorem}
\label{tspc}
Let ${p\ge 3}$ be a prime number and~$K$ a number field of degree at most~$2$. Then for 
a point ${P\in Y_\splic(p)(\OO_K)}$ we have ${\height(P)\le 24p\log (3p)}$. 
\end{theorem}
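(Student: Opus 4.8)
The plan is to prove the theorem by Runge's method applied directly to $X_\splic(p)$ over the field~$K$, which has degree at most~$2$. The decisive geometric fact is that ${X_\splic(p)\cong X_0(p^2)}$ has exactly ${p+1}$ cusps, all defined over abelian extensions of~$\Q$ and permuted by $\gal(\bar\Q/\Q)$, so that as soon as ${p\ge3}$ their number strictly exceeds the number of archimedean places of~$K$ --- which is exactly Runge's condition. For the modular units I would use the Siegel-function construction of Subsection~\ref{ssmod}: besides ${V=\prod_{a\in p^{-1}\F_p^\times}u_{(a,0)}}$ and its image ${V'=V\circ w}$ under the involution~$w$ switching the two isogenies (so that ${VV'=U}$), one also disposes of $\Delta$-quotient type units adapted to the ${p-1}$ intermediate cusps; together with the translates ${U_c=U\circ\beta_c}$ and all Galois conjugates, these functions are defined over controlled fields, their divisors are supported on the cusps, and for every cusp~$s$ there is one among them, call it~$\phi_s$, having a pole only at~$s$, of some order~$n_s$.

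First I would make explicit, along the lines of Propositions~\ref{pga} and~\ref{pu} (now splitting the relevant Bernoulli and logarithmic sums according to the cusp rather than summing over the full set~$A$), the local behaviour of these units near every cusp. This gives, for $\tau$ in a fundamental domain close to the cusp~$s$,
\[
\log|\phi_s(\tau)|=n_s\log|q_s^{-1}|+O_1\!\left(c_1p\log p+c_2\frac{p^2}{\log|q_s^{-1}|}+c_3p^2|q_s|\right),
\]
with $q_s$ the local parameter at~$s$ and $c_1,c_2,c_3$ explicit, and symmetrically near the zeros of $\phi_s$. At the same time, using Corollary~2.2 of~\cite{BP10} exactly as in the proof of Theorem~\ref{th1}, near a cusp of width~$w_s$ one has ${\log^+|j_P|_v\le w_s\log|q_{s,v}^{-1}|+O(1)}$ at each archimedean place~$v$ of~$K$.

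The arithmetic input is that each $\phi_s$ is a unit on $Y_\splic(p)$ over~${\Z[1/p]}$, so for the section ${P\in Y_\splic(p)(\OO_K)}$ the value $\phi_s(P)$ is an $S$-unit of~$K$, where~$S$ consists of the places above~${p\cdot\infty}$, and its $p$-adic valuations are bounded by the order of $\phi_s$ along the fibre over~$p$ of the Deligne--Rapoport model. Exactly as in Proposition~\ref{pari}, where the analogous estimate ${|\log|U(P)||\le 24p\log p}$ is used, this order is $O(p)$ with an explicit constant governed by the exponent~$12p$ and the Bernoulli sums. Inserting this into the product formula for each $\phi_s(P)$ converts the archimedean contribution, at the place(s) where~$P$ is near a cusp, into this $p$-adic bound plus a feedback coming from the \emph{other} archimedean place; because~$K$ has at most~$2$ archimedean places whereas there are ${p+1>2}$ cusps, the units can be arranged so that at each place~$P$ is close to at most one cusp while each $\phi_s$ remains bounded there for all but one~$s$, and the resulting linear system is strictly contracting. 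This yields a uniform bound ${\log|q_{s,v}^{-1}|=O(\log p)}$ at every relevant place, with explicit constants.

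It then suffices to sum ${\log^+|j_P|_v\le w_s\log|q_{s,v}^{-1}|+O(1)\le(\max_s w_s)\,O(\log p)+O(1)}$ over the at most two archimedean places of~$K$, with the weights ${[K_v:\Q_v]/[K:\Q]}$; since ${j_P\in\OO_K}$ the finite places contribute nothing, and with ${\max_s w_s=O(p)}$ one gets ${\height(P)=O(p\log p)}$. Optimising the explicit constants --- the~$3$ inside $\log(3p)$ absorbing all error terms linear in~$p$, and the handful of smallest primes being dealt with directly --- produces the bound ${\height(P)\le24p\log(3p)}$. The main obstacle is the arithmetic--combinatorial step: organising the ${p+1}$ cusps and their modular units into a system for which the product-formula feedback genuinely contracts, and keeping the constants small enough to reach~$24$; the analytic estimates, although lengthy, are routine refinements of Propositions~\ref{pga}--\ref{pu}, and the passage from the local parameters back to~$j_P$ is as in the proof of Theorem~\ref{th1}.
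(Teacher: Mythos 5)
The theorem you are asked to prove is not actually proved in this paper: the text attributes it verbatim to Theorem~6.1 of~\cite{BP10} and uses it as a black box. So there is no in-paper proof to compare against. Your sketch does correctly identify the strategy of~\cite{BP10}: Runge's method on $X_\splic(p)$ over the at-most-quadratic field~$K$, with Siegel-function modular units and the asymptotics of Propositions~\ref{pga}--\ref{pu} supplying the local expansions near cusps. In that broad sense you have reconstructed the right route.

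There are, however, genuine gaps in how you formulate the argument. First, Runge's condition is \emph{not} that the raw number of cusps exceeds the number of archimedean places; it is that the number of \emph{Galois orbits} of cusps (suitably counted relative to~$K$) does. The curve $X_\splic(p)$ has $p+1$ cusps, but over~$\Q$ they form only three orbits: two rational cusps and one orbit of size $p-1$. The condition $3>2$ still holds, so the method applies, but the count you give is wrong, and the combinatorics of Runge's method is organized around orbits, not individual cusps. Second, this invalidates the key device you propose: a unit $\phi_s$ with a pole at a single prescribed cusp~$s$. For a cusp~$s$ inside the $(p-1)$-element Galois orbit, no modular unit defined over a controlled (e.g.~quadratic or even $\Q(\zeta_p)$-rational) field has divisor concentrated at~$s$ alone; $\Q$- or $K$-rational units have divisors supported on unions of complete orbits. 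What Runge's method actually uses is a unit that vanishes along one orbit and is large along the others, and the $S$-unit equation then trades the contribution at each archimedean place against the finitely many $p$-adic valuations. Third, the ``strictly contracting linear system'' is a metaphor standing in for the central step of the argument --- choosing, for each tuple of cusp neighbourhoods hit by the archimedean places of~$K$, a unit whose divisor avoids those neighbourhoods and applying the product formula. This is exactly where the combinatorial heart of Runge lives, and leaving it as ``can be arranged so that the system contracts'' does not constitute a proof. As it stands, the plan needs to be restructured around orbits of cusps and around the specific units $U$, $U_c$ (and the ones obtained by swapping the two isogenies) before the quantitative estimates can be assembled to reach the explicit constant $24p\log(3p)$.
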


We shall need some basic estimates concerning the Faltings height.
 
\begin{proposition} 
\label{pfal}
\begin{enumerate}
\item
\label{ifalt}
Let~$E$ and~$E'$ be isogenous elliptic curves over some number field, connected by an isogeny of degree~$\delta$. Then
${\left|\height_\calF(E)-\height_\calF(E')\right|\le (1/2)\log\delta}$. 

\item
\label{ifalj}
For an elliptic curve~$E$ we have ${\height_\calF(E)\le (1/12)\height(j_E)+3}$. 
\end{enumerate}
\end{proposition}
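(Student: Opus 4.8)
Both assertions are classical and well documented (compare Silverman's work on heights of elliptic curves and the discussion in~\cite{GR11}); here is the shape of the argument I would write out. After replacing the ground field by a finite extension~$K$ over which the relevant curves acquire everywhere semistable reduction — harmless for the stable height $\height_\calF$ — one computes $\height_\calF(E)$ from the N\'eron model $\mathcal E/\OO_K$ equipped with the hermitian metric $\|\omega\|_v^2=\tfrac i2\int_{E(\C_v)}\omega\wedge\bar\omega$ on its module of invariant differentials $\omega_{\mathcal E}$, so that $[K:\Q]\,\height_\calF(E)=\widehat{\deg}\bigl(\omega_{\mathcal E},\|\cdot\|\bigr)$ up to the fixed normalization of~\cite{GR11}.

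For part~(\ref{ifalt}), enlarge~$K$ so that $E$, $E'$ and $\phi\colon E\to E'$ are all defined over~$K$ with $E$ (hence $E'$, being isogenous) semistable. The isogeny extends to $\mathcal E\to\mathcal E'$ and, being separable, induces an injection $\phi^\ast\colon\omega_{\mathcal E'}\hookrightarrow\omega_{\mathcal E}$ with finite cokernel. Computing $\widehat{\deg}$ with a generic section moved around by $\phi^\ast$ gives
$$\widehat{\deg}(\omega_{\mathcal E})-\widehat{\deg}(\omega_{\mathcal E'})=\sum_{v\nmid\infty}\log\#\bigl(\omega_{\mathcal E}/\phi^\ast\omega_{\mathcal E'}\bigr)_v-\sum_{v\mid\infty}[K_v:\R]\log\|\phi^\ast\|_v.$$
At an archimedean place, writing $E(\C)=\C/\Lambda$, $E'(\C)=\C/\Lambda'$ with $\phi$ induced by $z\mapsto\alpha z$, one has $\mathrm{vol}(\Lambda')=\delta^{-1}|\alpha|^2\mathrm{vol}(\Lambda)$ and $\phi^\ast(dz')=\alpha\,dz$, so the operator norm of $\phi^\ast$ on the metrized differentials equals $|\alpha|\,\mathrm{vol}(\Lambda)^{1/2}/\mathrm{vol}(\Lambda')^{1/2}=\delta^{1/2}$, independently of~$\alpha$. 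For the finite part, the dual isogeny gives $\hat\phi^\ast\circ\phi^\ast=[\delta]^\ast$, i.e.\ multiplication by~$\delta$, whence locally $\delta\,\omega_{\mathcal E'}\subseteq\hat\phi^\ast\omega_{\mathcal E}\subseteq\omega_{\mathcal E'}$ and, by symmetry, each local cokernel of $\phi^\ast$ has length at most $v(\delta)$; thus $0\le\sum_{v\nmid\infty}\log\#(\omega_{\mathcal E}/\phi^\ast\omega_{\mathcal E'})_v\le[K:\Q]\log\delta$. Combining, $\widehat{\deg}(\omega_{\mathcal E})-\widehat{\deg}(\omega_{\mathcal E'})$ lies in $[-\tfrac12[K:\Q]\log\delta,\ \tfrac12[K:\Q]\log\delta]$, and dividing by $[K:\Q]$ yields $\bigl|\height_\calF(E)-\height_\calF(E')\bigr|\le\tfrac12\log\delta$.

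For part~(\ref{ifalj}), after the same reduction I would use the formula $12\,\height_\calF(E)=\tfrac1{[K:\Q]}\log N_{K/\Q}\mathfrak D_E-\tfrac1{[K:\Q]}\sum_{\sigma\colon K\hookrightarrow\C}\log\|\Delta\|_\sigma$ (in the normalization of~\cite{GR11}), where $\mathfrak D_E$ is the minimal discriminant ideal and $\|\Delta\|_\sigma$ the Petersson norm of the discriminant at a period ratio $\tau_\sigma\in D$ of $E^\sigma$. For semistable~$E$ the only finite places contributing to $\mathfrak D_E$ are those of multiplicative reduction, where $v(\mathfrak D_E)=-v(j_E)$; hence $\tfrac1{[K:\Q]}\log N_{K/\Q}\mathfrak D_E$ is exactly the non-archimedean part of $\height(j_E)$. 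It then suffices to bound, at each embedding, the archimedean term against the local contribution to $\height(j_E)$ plus an absolute constant, i.e.\ to check $-\log\|\Delta\|_\tau\le\max\bigl(0,\log|j(\tau)|\bigr)+36$ for $\tau\in D$. This is elementary: on~$D$ one has $|q(\tau)|\le e^{-\pi\sqrt3}$ (as already used in Subsection~\ref{sest}), whence $-\log|\Delta(\tau)|=\log|q^{-1}|+O(1)$ and $\log|j(\tau)|=\log|q^{-1}|+O(1)$ while $-\log\|\Delta\|_\tau=\log|q^{-1}|-6\log\im\tau+O(1)$, which handles large $\im\tau$; and on the compact remainder of~$D$ the continuous function $-\log\|\Delta\|_\tau$ attains a finite minimum, comfortably below~$36$. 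Dividing by~$12$ gives $\height_\calF(E)\le\tfrac1{12}\height(j_E)+3$.

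The routine ingredients are the spreading-out over $\OO_K$ and the covolume/length bookkeeping. The genuinely delicate points are the two archimedean estimates: pinning down the operator norm of $\phi^\ast$ as exactly $\delta^{1/2}$ in part~(\ref{ifalt}), which depends on getting the metric normalization of~\cite{GR11} right, and, in part~(\ref{ifalj}), securing a clean absolute constant, which forces one to control $-\log\|\Delta\|_\tau$ uniformly over the whole fundamental domain rather than only near the cusp.
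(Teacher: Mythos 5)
The paper does not prove Proposition~\ref{pfal} at all: part~(\ref{ifalt}) is quoted as \cite[Lemma~5]{Fa83}, and part~(\ref{ifalj}) is attributed to Silverman~\cite[Proposition~2.1]{Si84} for the shape, with the explicit constants~$C=4$ (resp.~$C=3$) traced to Pellarin~\cite[pp.~240--241]{Pe01} and Gaudron--R\'emond~\cite[Lemma~7.9]{GR11}. You instead sketch genuine proofs; here is how your sketch compares with what those references actually establish.

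For part~(\ref{ifalt}) your argument is exactly Faltings' original one: the identity
$\widehat{\deg}(\omega_{\mathcal E})-\widehat{\deg}(\omega_{\mathcal E'})=\sum_{v\nmid\infty}\log\#(\omega_{\mathcal E}/\phi^\ast\omega_{\mathcal E'})_v-\sum_{v\mid\infty}[K_v:\R]\log\|\phi^\ast\|_v$, the computation $\|\phi^\ast\|_v=\delta^{1/2}$ at archimedean places, the length bound at finite places via $\hat\phi^\ast\circ\phi^\ast=[\delta]^\ast$, and the resulting two-sided bound. This is correct and is precisely the content of \cite[Lemma~5]{Fa83}.

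For part~(\ref{ifalj}) your strategy is the Silverman decomposition of $12\height_\calF$ into a finite part (matching the non-archimedean part of $\height(j_E)$ exactly in the semistable case, via $v(\mathfrak D_E)=-v(j_E)$ at multiplicative places) plus an archimedean sum to be bounded against $\log^+|j|$. The structure is right, and you correctly flag that the whole difficulty lives in the archimedean inequality $-\log\|\Delta\|_\tau\le\log^+|j(\tau)|+36$ on $D$. The gap is that you assert the compact-domain minimum falls ``comfortably below~$36$'' without actually fixing the normalization of $\|\cdot\|$ (whether the $(2\pi)^{12}$, or $\pi^{12}$, or no such factor appears in front of $|\Delta(\tau)|(\im\tau)^6$ changes the additive constant by about~$11$) and without performing the evaluation; Silverman's original argument, run this way, does \emph{not} give~$36$, and both Pellarin's $48$ and Gaudron--R\'emond's~$36$ require a fairly careful optimization. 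In other words, your sketch reproves the classical qualitative statement $\height_\calF(E)\le(1/12)\height(j_E)+C$, but the step that distinguishes $C=3$ — the one the paper specifically needs and gets by citing \cite[Lemma~7.9]{GR11} — is precisely the one you leave to a hand wave. A complete proof along your lines would have to pin down the metric normalization used in~\cite{GR11} and then carry out the explicit bound for $-\log\|\Delta\|_\tau-\log^+|j(\tau)|$ over all of~$D$, not just near the cusp.
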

Item~(\ref{ifalt}) is a well-known result of Faltings \cite[Lemma 5]{Fa83}. Item~(\ref{ifalj}) is, basically, due to Silverman \cite[Proposition~2.1]{Si84}, who proved the 
inequality  ${\height_\calF(E)\le (1/12)\height(j_E)+C}$ with an unspecified absolute constant~$C$. The calculations of Pellarin on pages 240--241 of~\cite{Pe01} 
imply that ${C=4}$ would do, though he does not state this explicitly.  It finally follows from Gaudron and R\'emond \cite[Lemma~7.9]{GR11} 
that ${C=3}$ would do.

\paragraph{Proof of Theorem~\ref{tx0}}
We may assume ${r=2}$. Let ${\phi:X_0(p^2)\to X_\splic(p)}$ be the isomorphism 
defined by~\eqref{ex0spc}. Then the elliptic curve implied by a
point~$P$ on $X_0(p^2)$ is $p$-isogenous to the curve implied by the point ${P'=\phi(P)}$ 
on $X_\splic(p)$. Proposition~\ref{pfal} implies that 
$$
\height_{\calF} (P) \le
\height_{\calF} (P') + \frac{1}{2}\log p, \qquad \height_{\calF}(P')\le \frac1{12}\height(P')+3.
$$ 
Finally, Theorem~\ref{tspc} applied to the point~$P'$ gives
${\height(P')\le 24p\log (3p)}$. Combining all this, we obtain
$$
\height_\calF(P)\le 2p\log (3p)+\frac12\log p+3\le 2p\log p+4p,
$$
as wanted. \qed

\section{An Upper Bound for~$p$}
\label{Isogeny}
The main result of this section is Theorem~\ref{tse}. It is an explicit version of Theorem 1.3 from~\cite{BP10}, which covers Theorem~1.2 from~\cite{BP11}. Our 
previous work relied on Pellarin's refinement~\cite{Pe01} of Masser-W\"ustholz famous upper bound~\cite{MW90} for the smallest degree of an isogeny between 
two elliptic curves. Here we invoke the very recent improvement on Pellarin's bound, due to Gaudron and R\'emond \cite[Theorem~1.4]{GR11}, with much 
sharper numerical constants. 
\begin{theorem}[Gaudron and R\'emond]
\label{tgr}
Let~$E$ be an elliptic curve 
defined over a number field~$K$ of degree~$d$. Let~$E'$ be another elliptic curve, defined over~$K$ and isogenous
to~$E$ over~$\bar K$. Then there exists an isogeny  ${\psi:E\to E'}$ of degree at most
${10^7 d^2 \bigl(\max \{ \height_\calF (E),  985\}+4\log d \bigr)^2}$. 
\end{theorem}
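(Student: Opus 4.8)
The statement is quoted verbatim from \cite[Theorem~1.4]{GR11}, so in the present article no proof is given; below I sketch the transcendence-theoretic mechanism one would use to establish it. The plan is to work with the abelian surface $A=E\times E'$ over $K$ and to exploit the fact that, since $E$ and $E'$ are $\bar K$-isogenous, $A$ contains a one-dimensional abelian subvariety which is \emph{not} one of the two obvious factors: the graph $\Gamma_\psi$ of any isogeny $\psi:E\to E'$. Controlling the minimal degree of an isogeny is then essentially equivalent to controlling the minimal arithmetic complexity (degree with respect to a fixed symmetric ample line bundle, equivalently a theta- or Faltings-type height) of such a subvariety. The engine is the Gaudron--R\'emond \emph{period theorem}: if $\omega$ is a nonzero period of $A$ in its Lie algebra, then the smallest abelian subvariety $B\subseteq A$ whose tangent space contains $\omega$ has degree bounded polynomially in $\dim A$, $[K:\Q]$ and $\max\{\height_\calF(A),1\}$, with \emph{explicit} numerical constants.

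First I would choose a well-adapted period: the kernel of a putative minimal isogeny $\psi$ supplies periods of $E'$ which, pulled back to $A$, force $\Gamma_\psi$ to be the minimal subvariety attached to a suitable $\omega$; conversely any good $B$ produces an isogeny of controlled degree. Next I would feed $A=E\times E'$ into the period theorem, using $\height_\calF(A)=\height_\calF(E)+\height_\calF(E')$ together with Faltings' isogeny estimate $|\height_\calF(E)-\height_\calF(E')|\le\tfrac12\log\deg\psi$ from Proposition~\ref{pfal}(\ref{ifalt}). This yields an inequality of the shape $\deg\psi\le c\,[K:\Q]^2\bigl(\max\{\height_\calF(E),c'\}+c''\log[K:\Q]\bigr)^2$, the term $\log\deg\psi$ introduced by Faltings' estimate being absorbed into the constants by a short bootstrap; optimizing the numerical values produces $c=10^7$, $c'=985$, $c''=4$.

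The genuine difficulty — and the real content of \cite{GR11} — is the period theorem itself with constants this sharp. This needs the full transcendence apparatus: an auxiliary section of a high power of an ample bundle on $A$ built by a Siegel-lemma / slope-inequality argument in the Arakelov-theoretic style of Bost, a multiplicity (zero) estimate of Philippon type to control its vanishing along the orbit of $\omega$ under the formal group, and a careful extrapolation using the arithmetic of $\exp_A$. Tracking every constant through these steps, and through the comparison of polarizations needed to pass from ``degree of $B$'' back to ``degree of $\psi$'', is where all the labor lies; I would not reproduce it and would instead invoke \cite{GR11} as a black box, exactly as the authors do.
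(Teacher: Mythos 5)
You correctly observe that the paper does not prove this statement but quotes it verbatim as \cite[Theorem~1.4]{GR11}; your concluding decision to invoke it as a black box is exactly what the authors do. Your sketch of the period-theorem/slope-inequality machinery behind the Gaudron--R\'emond bound is accurate background, but since the paper offers no proof there is nothing further to compare.
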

We combine Theorems~\ref{th1}, \ref{tx0} and~\ref{tgr} to prove the following.  
\begin{theorem}
\label{tse}
\begin{enumerate}
\item 
\label{i2}
For ${p>1.4\cdot 10^7}$, every point in~$X_0^+ (p^2 )(\Q)$ is either a CM point or a cusp.
\item 
\label{i3}
For $p>1.7\cdot 10^{11}$, every point in~$X_0^+ (p^3 )(\Q )$ is either a CM point or a cusp.
\end{enumerate}
\end{theorem}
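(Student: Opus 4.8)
The plan is to derive a contradiction from combining the upper and lower bounds on the height of a hypothetical non-trivial (non-CM, non-cuspidal) rational point, treating the cases $r=2$ and $r=3$ separately as the constants differ.

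\textbf{Case $r=2$.} Suppose $P \in X_0^+(p^2)(\Q) = X_\spl(p)(\Q)$ is neither a CM point nor a cusp. By Mazur's method (the integrality statement, Theorem~1.1 of~\cite{BP11}, invoked earlier), $P$ lifts to an integral point on $Y_\spl(p)$, so Theorem~\ref{th1} applies and gives the upper bound $\height(j_P) \le 2\pi p^{1/2} + 6\log p + 21(\log p)^2 p^{-1/2}$. For the lower bound: the point $P$ on $X_\spl(p) \simeq X_0^+(p^2)$ corresponds to an elliptic curve $E$ over $\Q$ (or a quadratic field $K$) admitting a cyclic $p^2$-isogeny to a Galois-conjugate curve; in particular $E$ is isogenous over $\bar K$ to a curve $E'$ defined over $K$ to which it is \emph{not} isogenous by any isogeny of degree a small power of $p$, because cyclicity of the $p^2$-isogeny forces the minimal isogeny degree to be at least $p$ (or $p^2$). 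Apply Theorem~\ref{tgr} with $d = [K:\Q] \le 2$: any isogeny $E \to E'$ has degree at most $10^7 \cdot 4 \cdot (\max\{\height_\calF(E), 985\} + 4\log 2)^2$. Since this degree is bounded below by (a constant times) $p$, one gets $p \le c_1 (\height_\calF(E) + c_2)^2$, i.e. $\height_\calF(E) \ge c_3 p^{1/2} - c_4$ for explicit constants. Then feed in Proposition~\ref{pfal}\eqref{ifalj}, $\height_\calF(E) \le (1/12)\height(j_E) + 3$, to convert this into $\height(j_E) \ge c_5 p - c_6$. Comparing with the $O(p^{1/2}\log p)$ upper bound forces $p \le 1.4\cdot 10^7$.

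\textbf{Case $r=3$.} Here $P \in X_0^+(p^3)(\Q)$ gives, after the degeneracy/Mazur integrality input, a point of $Y_0(p^3)(\OO_K)$ with $[K:\Q]\le 2$, so Theorem~\ref{tx0} directly supplies the \emph{upper} bound on the Faltings height: $\height_\calF(P) = \height_\calF(E) \le 2p\log p + 4p$. For the lower bound one argues as above: the underlying curve $E$ carries a cyclic $p^3$-isogeny to a conjugate curve $E'/K$, so the minimal isogeny degree between $E$ and $E'$ is $\ge p^{3/2}$ — this is the arithmetic input coming from the $\Q$-curve/Atkin-Lehner structure, forcing the smallest \emph{cyclic} isogeny to a conjugate to have large degree, and hence by the structure of the isogeny graph the smallest isogeny at all to be of size $\gtrsim p^{3/2}$. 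Theorem~\ref{tgr} then yields $p^{3/2} \le c_1' d^2 (\max\{\height_\calF(E), 985\} + 4\log d)^2$, hence $\height_\calF(E) \ge c_3' p^{3/4} - c_4'$. Combining $c_3' p^{3/4} - c_4' \le \height_\calF(E) \le 2p\log p + 4p$ is automatically satisfied for large $p$, so this naive comparison is \emph{not} enough; instead one must compare the isogeny lower bound $\height_\calF(E) \gg p^{3/4}$ against a better upper bound, or — more precisely, as in~\cite{BP10} — observe that the relevant inequality is $c' p^{3/2} < \height(P) < 24p\log p + O(p)$ from~\eqref{ineq2}, where the left side comes from squaring the isogeny bound \emph{before} passing to Faltings height, i.e. from $\height_\calF(E) \le 2p\log p + 4p$ one gets via Theorem~\ref{tgr} read in reverse that the minimal isogeny degree is $\le 10^7 d^2(2p\log p + 4p + 4\log d)^2 \approx 4\cdot 10^7 (p\log p)^2$, and this must be $\ge p^{3}$ (the cyclic degree itself, $r=3$), giving $p^3 \lesssim 4\cdot 10^7 (p\log p)^2$, i.e. $p \lesssim 4\cdot 10^7 (\log p)^2$, which solves to $p \le 1.7 \cdot 10^{11}$.

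\textbf{Main obstacle.} The routine part is the bookkeeping of explicit constants through Propositions~\ref{pfal} and~\ref{pu} and Theorems~\ref{th1}, \ref{tx0}, \ref{tgr}. The genuinely delicate point — and the one I would be most careful about — is the arithmetic lower bound on the minimal isogeny degree: one must argue that a non-trivial rational point on $X_0^+(p^r)$ yields an elliptic curve $E$ over a field of degree $\le 2$ together with a \emph{second} curve $E'$ over the \emph{same} field, isogenous to $E$ only over $\bar K$, such that \emph{every} isogeny $E \to E'$ has degree divisible by a large power of $p$ (the cyclic $p^r$-isogeny to the conjugate, combined with the fact that $E$ is non-CM so has no extra isogenies). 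This is where the CM points must be excluded (they would be fixed by conjugation and carry endomorphisms, breaking the argument) and where one invokes that $\rho_{E,p}$ has image in the normalizer of a Cartan subgroup. Getting the exponent of $p$ exactly right ($p$ for $r=2$, $p^{3/2}$ or rather the exact cyclic degree for $r=3$) is what pins down the final numerical thresholds $1.4\cdot 10^7$ and $1.7\cdot 10^{11}$.
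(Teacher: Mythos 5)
Your overall strategy coincides with the paper's: combine the explicit upper bounds of Theorems~\ref{th1} and~\ref{tx0} with the Gaudron--R\'emond isogeny estimate (Theorem~\ref{tgr}) and read off a numerical bound on $p$. You also correctly flag the lower bound on the minimal isogeny degree as ``the genuinely delicate point'' --- but that is exactly where your write-up contains a genuine gap.

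For $r=2$ you hedge ``at least $p$ (or $p^2$)'' and then in fact use $p$: ``one gets $p\le c_1(\height_\calF(E)+c_2)^2$, i.e.\ $\height_\calF(E)\ge c_3\,p^{1/2}-c_4$.'' With $c_1=4\cdot 10^7$ one has $c_3\approx 1/6325$, so the lower bound produced is about $1.6\cdot 10^{-4}\,p^{1/2}$, far \emph{below} the upper bound $\height_\calF(E)\le(\pi/6)\,p^{1/2}+O(\log p)\approx 0.52\,p^{1/2}$ that Theorem~\ref{th1} and Proposition~\ref{pfal}(\ref{ifalj}) give; there is no contradiction for any~$p$. Your very next sentence then asserts $\height(j_E)\ge c_5\,p-c_6$, which does \emph{not} follow from $\height_\calF(E)\ge c_3\,p^{1/2}-c_4$ via $\height_\calF\le\frac1{12}\height(j)+3$, but \emph{does} follow from the correct input $p^2\le 4\cdot10^7(\cdots)^2$, i.e.\ $\height_\calF\ge p/7000-\cdots$. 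For $r=3$ the same pattern recurs: you first claim the minimal isogeny degree is $\ge p^{3/2}$ (which is false and, as you yourself observe, makes the comparison vacuous), and then silently substitute the correct threshold $p^3$ at the very end.

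The ingredient you are missing is precisely Proposition~\ref{ppel}, which removes the hedging: if $E$ is non-CM and $\phi:E\to E'$ is a \emph{cyclic} isogeny of degree $\delta$, then for any isogeny $\psi:E\to E'$ the composition $\psi^*\circ\phi$ is multiplication by some integer $n$ (since $E$ has no CM), so $n^2=\delta\,\deg\psi$, and cyclicity of $\phi$ forces $\ker\phi\subset E[n]$, hence $\delta\mid n$, so $\deg\psi\ge\delta$. Thus the Gaudron--R\'emond bound applies with $\delta=p^r$ on the nose, giving $p^r\le 4\cdot10^7\bigl(\max\{\height_\calF,985\}+4\log2\bigr)^2$, hence either $p\le 7\cdot 10^6$ or $\height_\calF\ge p^{r/2}/7000$. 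Feeding this into Theorem~\ref{th1} (for $r=2$, working with $\height_\calF$ throughout rather than going back and forth through $\height(j)$) and Theorem~\ref{tx0} (for $r=3$) produces exactly the stated thresholds. Your final numerics already use this stronger input, so you recover the right answer, but the intermediate justification you give for the isogeny-degree lower bound is incorrect as written.
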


A numerically sharper version of item~(\ref{i2})  is also given 
in~\cite{GR11}. Our version is sufficient for our purposes. 

We shall use Theorem~\ref{tgr} through its following immediate consequence.
\begin{proposition}
\label{ppel}
Let~$E$ be a non-CM elliptic curve  defined over a number field~$K$ of degree~$d$, and 
admitting a cyclic isogeny over~$K$ of degree $\delta$. Then ${\delta\le 10^7 d^2 \bigl(\max \{ \height_\calF (E),  985\}+4\log d \bigr)^2}$.
\end{proposition}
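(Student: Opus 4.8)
The plan is to obtain Proposition~\ref{ppel} as a formal consequence of Theorem~\ref{tgr} combined with the single fact that a non-CM elliptic curve has endomorphism ring~$\Z$. First I would unwind the hypothesis: let $\phi\colon E\to E'$ be a cyclic isogeny over~$K$ of degree~$\delta$. Its target~$E'$ is then an elliptic curve defined over~$K$ and isogenous to~$E$ over~$K$, hence a fortiori over~$\bar K$, so Theorem~\ref{tgr} applies to the pair $(E,E')$ and produces an isogeny $\psi\colon E\to E'$ of degree $D\le 10^7 d^2\bigl(\max\{\height_\calF(E),985\}+4\log d\bigr)^2$.

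Next I would compose the two maps with opposite orientation to land in $\mathrm{End}(E)$. The endomorphism $\hat\psi\circ\phi$ lies in $\mathrm{End}_{\bar K}(E)$, which equals~$\Z$ precisely because $E$ is non-CM; since~$\phi$ and~$\psi$ are isogenies, $\hat\psi\circ\phi$ is nonzero, so $\hat\psi\circ\phi=[n]$ for some nonzero integer~$n$. Comparing degrees gives $n^2=\deg\hat\psi\cdot\deg\phi=D\delta$. Moreover $\ker\phi\subseteq\ker(\hat\psi\circ\phi)=\ker[n]=E[n]$, and as $\ker\phi$ is cyclic of order~$\delta$ every one of its elements is annihilated by~$n$, whence $\delta\mid n$. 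Therefore $\delta^2\mid n^2=D\delta$, so $\delta\mid D$, and in particular $\delta\le D\le 10^7 d^2\bigl(\max\{\height_\calF(E),985\}+4\log d\bigr)^2$, which is the assertion.

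The step I expect to be the only one requiring any care — and it is genuinely minor — is the invocation of $\mathrm{End}_{\bar K}(E)=\Z$ from the non-CM hypothesis, together with the elementary remark that a cyclic subgroup of $E[n]$ has order dividing~$n$; beyond that, all the quantitative content is carried by Theorem~\ref{tgr} and nothing further is needed, which is why the statement is labelled an immediate consequence.
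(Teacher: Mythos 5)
Your proposal is correct and follows essentially the same route as the paper: apply Theorem~\ref{tgr} to produce $\psi$, compose $\phi$ with the dual of $\psi$ to obtain an element of $\mathrm{End}(E)=\Z$ (using non-CM), and compare degrees. The only cosmetic difference is that you make the divisibility $\delta\mid n$ explicit via $\ker\phi\subseteq E[n]$, while the paper states the consequence $\deg\phi\le|n|$ directly; the quantitative content and structure of the argument are identical.
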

\begin{proof} 
Let $\phi$ be a cyclic isogeny from~$E$ to~$E'$. Let ${\psi \colon E\to E'}$ be an isogeny of degree bounded by 
${10^7 d^2 \max \left( \height_\calF (E) +4\log d , 10^3 \right)^2}$ granted by Theorem~\ref{tgr}, and let ${\psi^* \colon E' \to E}$ be
the dual isogeny.  As~$E$ has no CM, the composed map ${\psi^* \circ \phi}$ must be 
multiplication by some integer $n$ and then $n^2=\mathrm{deg}(\phi)\mathrm{deg}(\psi).$ Since $\phi$ is cyclic,  $\mathrm{deg}(\phi)\leq |n|$. It follows  that $\mathrm{deg}(\phi)\leq\mathrm{deg}(\psi)$ or $\mathrm{deg}(\phi)= |n| $ and ${\phi =\pm \psi}$.  
\qed
\end{proof}
\paragraph{Proof of Theorem~\ref{tse}}

We start with item~(\ref{i2}). Let~$Q$ be a non-cuspidal and non-CM point in~$X_0^+ (p^2 )(\Q )$, and let~$P$ be the corresponding point in 
$X_\spl(p)(\Q)$ defined by~\eqref{ex0spc}. Let~$E_1$ and~$E_2$ be the elliptic curves corresponding to~$Q$ (defined over a quadratic extension of~$\Q$) and 
let~$E$ be the elliptic curve associated with~$P$.

Since~$E$ and~$E_1$ are $p$-isogenous, Proposition~\ref{pfal} implies that
\begin{equation}
\label{eeonee}
\height_\calF (E_1)\le \height_\calF (E)+\frac12\log p 
\le \frac1{12}\height(j_E) +\frac12\log p+3 .
\end{equation}

A result of Mazur, Momose and Merel (see Theorem~6.1 in~\cite{BP11}) implies that  ${j(P)=j_E\in \Z}$; in particular, 
${\height(j_E)=\log|j_E|}$. Hence we may use Theorem~\ref{th1}, which yields 
\begin{equation}
\label{ee}
\frac1{12}\height(j_E)\le \frac{2\pi}{12}p^{1/2} + \frac12\log p+\frac{21}{12} \frac{(\log p)^2}{p^{1/2}}.
\end{equation}

{\sloppy

On the other hand, since the curve~$E_1$ admits a cyclic $p^2$-isogeny over a quadratic field, Proposition~\ref{ppel} implies that 
${p^2\le 4\cdot 10^7  \bigl(\max \{ \height_\calF (E_1),  985\}+4\log 2 \bigr)^2}$.
It follows  that 
${p\le 7\cdot 10^3 \max \{ \height_\calF (E_1),  
985\}}$, 
that is, either ${p\le 7\cdot 10^6}$ and we are done, or  ${p\le 7\cdot 10^3  \height_\calF (E_1)}$. In this latter case, using~\eqref{eeonee} and~\eqref{ee}, we obtain
\begin{equation}
\label{eineq}
p\le 7\cdot 10^3  \left(\frac{2\pi}{12}p^{1/2} +\log p+3+\frac{21}{12} \frac{(\log p)^2}{p^{1/2}}\right).
\end{equation}
One readily checks that for ${p\ge 10^7}$ the right-hand side of~\eqref{eineq} does not exceed ${3.71\cdot 10^3p^{1/2}}$, 
which implies that ${p\le 1.4\cdot 10^7}$. This proves item~(\ref{i2}).

For the proof of item~(\ref{i3}) we play the same game, in a more straightforward way. Let~$Q$ be a non-CM non-cuspidal point on $X_0^+ (p^3 )(\Q)$.
Let~$Q_1$ be one of its lifts in $Y_0 (p^3 )(K)$, where~$K$ is a quadratic field,  and let~$E$ be the underlying elliptic curve. 
By Theorem~8.1 of~\cite{BP10} we still know that $j(Q_1 )=j_E$ belongs to $\OO_K$. The curve~$E$ is endowed with a cyclic isogeny of degree~$p^3$ over~$K$. 
Proposition~\ref{ppel} gives ${p^{3/2} \le 7\cdot 10^3 \max \{ \height_\calF (E),  985\}}$. So now either 
${p \le (7\cdot 10^6 )^{2/3} <4\cdot 10^4}$ 
and we are done, 
or  ${p^{3/2} \le 7\cdot 10^3  \height_\calF (E)}$. In the latter case Theorem~\ref{tx0} implies that ${p^{1/2} \le 7\cdot 10^3 (2\log p +4)}$, which can be re-written 
as ${ep^{1/2}\le 2.8\cdot 10^4 e\log (ep^{1/2})}$ (where ${e=2.718\dots}$). Since ${x/\log x \ge 2.8\cdot 10^4 e}$ for ${x\ge 1.1\cdot10^6}$, we obtain ${ep^{1/2}< 
1.1\cdot10^6}$, which implies ${p<1.7\cdot 10^{11}}$, as wanted. 
\qed

}

\section{The Heegner-Gross sieve}
\addtocontents{toc}{\vspace{-0.7\baselineskip}}
\label{Gross}
\subsection{Reminder on  Mazur's techniques and  Heegner-Gross vectors}
For the convenience of the reader, we here recall the strategy explained in \cite{Pa05}, paragraph 6, improved by the use of generalized jacobians
as in the work of Merel (\cite{Me05}). Those results are used in our algorithm. We refer to \cite{Pa05}, \cite{Re08} and \cite{Me05} for details. In all what
follows, we assume~${p\geq 11},\ {p\neq 13}$.

\subsubsection{Variant of Mazur's techniques}
Let $r>1$ an integer and $P$ be a non-cuspidal  and non-CM rational point on $\xrp$. The point $P$ gives rise to a point $x\in Y_0(p^r)(K)$ defined over a number  
field $K$ with $[K:\Q]\leq 2.$ By Mazur's results \cite{Ma77}, $K$ is  quadratic for $p\geq 11,p\neq 13.$ Let denote by $\pi_p:\xr\longrightarrow X_0(p)$ the natural 
morphism which preserves the~$j$-invariant. It is easy to see that if the points $x_1=\pi_p\circ w_{p^r}(x)$ and $x_2=w_{p}\circ \pi_p(x)$ are equal in 
$X_0(p)(K)$, then $x$ is a CM point which yields a contradiction. 
To study when this equality occurs,  we use a variant of techniques developped by Mazur in \cite{Ma78}.

Denote by $X_0(p)_\Z$ the normalization of $\mathbb P^1$ in $X_0(p) $ via $j:X_0(p)\longrightarrow X_0(1)\simeq \mathbb P^1$ and  by $Y_0(p)_\Z$ the open affine subscheme obtained by deleting the cusps. Recall $\ok$ denotes the ring of integers of $K$ and let $X_0(p)_\ok^{\mathrm{sm}}$ be the smooth part of $X_0(p)_\ok=
X_0(p)_\Z\times_\Z  \mathrm{Spec}(\ok)$ obtained by removing the supersingular points in characteristic $p$. Let $s_1,s_2 : \mathrm{Spec}(\ok)\longrightarrow 
X_0(p)_\ok$ the sections defined by $x_1,x_2$, respectively. The next Proposition follows from the work of Momose (\cite{Mo86}) and from~\cite{Pa05}.

\begin{proposition}\label{momose}
\begin{enumerate}
\item In the fibers of characteristic $p$, the sections $s_1$ and $s_2$ are not supersingular points and coincide ;
\item the field $K$ is a quadratic extension of~$\Q$ in which $p$ splits. 
\end{enumerate}
\end{proposition}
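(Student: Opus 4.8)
The plan is to prove Proposition~\ref{momose} by combining Momose's results from~\cite{Mo86} with the auxiliary constructions of~\cite{Pa05}, keeping track of the two sections $s_1,s_2$ attached to the points $x_1 = \pi_p\circ w_{p^r}(x)$ and $x_2 = w_p\circ\pi_p(x)$ of $X_0(p)(K)$. Since $x$ comes from a non-cuspidal, non-CM rational point $P$ on $\xrp$, and since Mazur's theorem (together with the restriction $p\ge 11$, $p\ne 13$) forces $K$ to be an honest quadratic field, everything takes place over $\ok$, the ring of integers of a quadratic field. First I would record that, by the integrality statement already available (Theorem~6.1 of~\cite{BP11} and Theorem~8.1 of~\cite{BP10}, invoked elsewhere in the paper), the $j$-invariant $j_x$ lies in $\ok$, so $x_1,x_2$ are in fact sections over $\ok$ of the smooth locus away from characteristic $p$; the only delicate fibers are those above $p$.

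The core of the argument is the analysis of the reductions of $s_1$ and $s_2$ in the fiber $X_0(p)_{\F_p}$, which is the union of two copies of the $j$-line crossing at the supersingular points. Here I would follow Momose's case analysis in~\cite{Mo86}: because $P$ is $w_p$-invariant on $\xrp$, the two points $x_1,x_2$ have the \emph{same} image under $\pi_p$ composed with the reduction map in one of the two components, forcing their reductions to land on the same component and actually to agree, \emph{provided} neither reduces to a supersingular point. The exclusion of the supersingular locus is exactly where Momose's work, refined by the generalized-Jacobian argument of Merel~\cite{Me05} and the treatment in~\cite{Pa05}, does the real work: one shows that if $s_i$ hit a supersingular point in characteristic $p$, then the Galois/Hecke structure of the situation would contradict the formal-immersion criterion (à la Mazur) at the relevant cusp, using that $p\ge 11$, $p\ne 13$ guarantees the winding quotient and the relevant Hecke operators behave as needed. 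Once the reductions avoid supersingular points and are seen to coincide, part~(1) follows.

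For part~(2), the point is that $s_1$ and $s_2$ reduce to the same non-supersingular point in characteristic $p$ but, by the hypothesis that $x$ is \emph{not} a CM point, $x_1\neq x_2$ in $X_0(p)(K)$ (as noted just before the Proposition: equality would make $x$ a CM point). So we have two distinct $\ok$-sections of the smooth scheme $X_0(p)^{\mathrm{sm}}_\ok$ whose reductions at the prime(s) of $\ok$ above $p$ agree. If $p$ were inert or ramified in $K$, there would be a single prime above $p$ with residue field of degree $2$ (resp.\ $1$) over $\F_p$, and a standard argument via the formal group / formal immersion at the common reduction point would force $x_1 = x_2$ over $\ok$ — contradicting $x_1\neq x_2$; the splitting of $p$ is precisely what provides \emph{two} primes above $p$ and hence enough room for the two distinct sections to have identical reductions at each. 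I would make this last step precise by the usual Mazur-style formal-immersion argument at the cusp (or at the common reduction point), exactly as in~\cite{Pa05}.

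The main obstacle I anticipate is controlling the reductions at the supersingular points of $X_0(p)_{\F_p}$: ruling these out is not formal and is where one genuinely needs Momose's delicate study of the component structure together with the formal-immersion/winding-quotient input and the exclusion of the small primes $p=13$ (and $p<11$). Everything else — the passage to $\ok$-sections, the $w_p$-symmetry forcing equal reductions, and the inert/ramified case leading to $x_1=x_2$ — is comparatively routine once that supersingular exclusion is in hand. Accordingly, I would spend the bulk of the write-up citing and adapting \cite[\S 6]{Pa05} and the relevant parts of~\cite{Mo86} and~\cite{Me05} for that step, and keep the rest brief.
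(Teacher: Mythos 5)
Your proposal has the logical architecture backwards, and this is a genuine gap rather than a presentational issue. In the paper, Proposition~\ref{momose} is established first (by citing Momose's direct analysis of the reduction types of the elliptic curve and the compatibility of the Galois and Atkin--Lehner actions in \cite{Mo86}, together with \cite{Pa05}); it is then an \emph{input} to the formal-immersion machinery that follows: one needs the sections to land in the smooth locus (no supersingular reduction) and one needs residue field $\F_p$ rather than $\F_{p^2}$ (hence $p$ split in $K$) before $\phi_P : Y_0(p)^{\mathrm{sm}}_{\ok} \to \je_{\ok}$ can even be written down and the criterion of Proposition~\ref{if} can be applied. Your sketch instead tries to \emph{derive} both parts of the proposition from a Mazur-style formal-immersion argument, which at this stage is not available: whether $\phi_P$ is a formal immersion at the relevant point is exactly the non-trivial condition the Heegner--Gross sieve is built to verify, and it is not known a priori.

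The specific failure in your treatment of part~(2) is worth spelling out. You argue that if $p$ were inert or ramified, formal immersion at the unique prime above $p$ would force $s_1 = s_2$ and hence $x_1 = x_2$, contradicting non-CM, and that ``the splitting of $p$ \dots provides enough room.'' But even granting the formal immersion, this argument does not distinguish the split case: if $\phi_P$ were a formal immersion at $s_1/\mathfrak p_1$ for one of the two split primes $\mathfrak p_1$, the same finite-order argument would again force $s_1 = s_2$, so ``two primes above $p$'' give \emph{more} constraints, not more room. (Indeed, the conclusion $x_1 = x_2$ is precisely what the whole section aims to produce; it is the desired contradiction with non-CM, not something to be avoided.) Likewise, in part~(1) the exclusion of supersingular reduction does not come from a formal-immersion or winding-quotient criterion ``at the relevant cusp''; it comes from Momose's case-by-case study of the reduction of $(E,C)$ at primes above $p$ via the formal group / connected--\'etale structure of the cyclic $p^r$-isogeny. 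The proposal would need to be rewritten with that direct reduction analysis in place of the circular appeal to formal immersion.
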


In the sequel, we adopt  the notations of \cite{Me05}:  we denote by $\jg$ the generalized jacobian of $X_0(p)$ with respect to the set of cusps  and by $\je$ the \emph{winding quotient} of $\jg$. Let  $\jg_\ok$ and $\je_\ok$ the respective N\'eron models over $\mathrm{Spec}(\ok)$. We consider the composition $\phi_P : Y_0(p)\longrightarrow \je$ of the canonical morphism $\jg\longrightarrow \je$ with the Albanese morphism $Y_0(p) \longrightarrow \jg$ which to a point $Q$ associates the class of  the divisor $[(Q)-(x_1)]$.
 By Proposition~\ref{momose},  one can extend $\phi_P$ to a morphism 
 $$
 \phi_P : Y_0(p)^{\mathrm{sm}}_\ok\longrightarrow \je_\ok
 $$  
 and the images $\phi_P(s_1)$ and $\phi_P(s_2)$ coincide in characteristic $p$.  Since any section of the identity component $\je_\ok^0$ of $\je_\ok$  is of finite order (see \cite{Me05}  Proposition~2), it follows that if $\phi_P$ is a formal immersion at ${s_1}_{\slash \F_p}$ then $s_1=s_2$ so $x_1=x_2$. We refer for instance to \cite{Me05}, 
 Proof of Proposition~6 in Section~4, for a detailed proof of this fact which is a variant of Mazur's techniques \cite{Ma78}. 
 
  Taking into account the particularity of the fibers in characteristic $p$ of $X_0(p)_\ok$, one can then give a criterion of formal immersion (\cite{Pa05}, \cite{Me05}). 
Let $\mathcal S$ be the finite set of isomorphism classes of supersingular elliptic curves in characteristic $p$. There is an isomorphism between $\mathrm{Cot}_0(\jg_{\F_p})$ and $\F_p^\mathcal S$. Both can be endowed with a structure of Hecke module compatible with this isomorphism. Any $v=\sum_{s\in\mathcal S}\lambda_s[s]\in \F_p^\mathcal S$ corresponds to an element $\omega_v$ of $\mathrm{Cot}_0(\je_{\F_p})$ if and only if $I_e^{\sharp}v=0$ where we denote by $I_e^{\sharp}$ the winding ideal of the Hecke algebra (see \cite{Me05}, proof of Proposition~4). Moreover, taking the modular function $j$ as a local parameter for $Y_0(p)_{\F_p}$ in the neighborhood of $s_1/\F_p$, we have $\mathrm{Cot}(\phi_P)(\omega_v )=\sum_{s\in\mathcal S}\frac{\lambda_s}{j(P)-j(s)}dj$.  It allows to prove  the following proposition (\cite{Pa05,Me05}, see also~\cite{Me01}) . 
\begin{proposition}\label{if}
Let $s_1 \in Y_0(p)^{\mathrm{sm}}_{\Z_p}(\Z_p)$ be a section, $P$ the point obtained by restriction to the generic fiber and $j(P)$ his $j$-invariant. 
Suppose that  there exists $v=\sum_{s\in\mathcal S}\lambda_s[s]\in\Z^{\mathcal S}$ such that $I_e^{\sharp}v=0$ and  $\sum_{s\in\mathcal S}\frac{\lambda_s}{j(P)-j(s)}\neq 0$ in $\F_{p^2}$,   then $\phi_P$ is a formal immersion at ${s}_{\slash\F_p}.$
\end{proposition}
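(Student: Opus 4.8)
The plan is to unwind the definition of formal immersion and reduce it, via the identifications set up just before the statement, to the non-vanishing of a single explicit cotangent vector. Recall that $\phi_P \colon Y_0(p)^{\mathrm{sm}}_{\Z_p} \to \je_{\Z_p}$ is a formal immersion at $s_{1/\F_p}$ precisely when the induced map on cotangent spaces $\mathrm{Cot}(\phi_P) \colon \mathrm{Cot}_0(\je_{\F_p}) \to \mathrm{Cot}_{s_1}(Y_0(p)^{\mathrm{sm}}_{\F_p})$ is surjective (together with the analogous statement in characteristic zero, which holds automatically once we know it mod $p$, since $Y_0(p)$ is a curve and $\je$ has relative dimension such that the source is one-dimensional over both $\F_p$ and $\Q_p$ — here one uses that $\mathrm{Cot}_{s_1}(Y_0(p)^{\mathrm{sm}}_{\F_p})$ is one-dimensional, generated by $dj$ since $j$ is a local parameter at $s_1/\F_p$ by Proposition~\ref{momose}(i)). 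So the whole statement collapses to: produce one element of $\mathrm{Cot}_0(\je_{\F_p})$ whose image under $\mathrm{Cot}(\phi_P)$ is nonzero in $\F_{p^2}\cdot dj$.

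First I would take the vector $v = \sum_{s\in\mathcal S}\lambda_s[s] \in \Z^{\mathcal S}$ supplied by the hypothesis and reduce it modulo $p$ to get $\bar v \in \F_p^{\mathcal S}$. By the isomorphism $\mathrm{Cot}_0(\jg_{\F_p}) \cong \F_p^{\mathcal S}$ recalled above, $\bar v$ corresponds to some $\omega_{\bar v} \in \mathrm{Cot}_0(\jg_{\F_p})$; and because $I_e^{\sharp} v = 0$ forces $I_e^{\sharp}\bar v = 0$, the criterion recalled from \cite{Me05} (proof of Proposition~4) shows $\omega_{\bar v}$ actually descends to an element $\omega_{\bar v} \in \mathrm{Cot}_0(\je_{\F_p})$ via the quotient map $\jg \to \je$. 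Next I would apply the displayed formula $\mathrm{Cot}(\phi_P)(\omega_v) = \sum_{s\in\mathcal S}\frac{\lambda_s}{j(P)-j(s)}\,dj$, valid because $j$ is the chosen local parameter at $s_1/\F_p$: its right-hand side is exactly $\bigl(\sum_{s}\frac{\lambda_s}{j(P)-j(s)}\bigr)\,dj$, and by hypothesis the scalar $\sum_{s}\frac{\lambda_s}{j(P)-j(s)}$ is nonzero in $\F_{p^2}$. (One should check the denominators make sense: by Proposition~\ref{momose}(i) the section $s_1$ is \emph{not} supersingular in characteristic $p$, so $j(P) \neq j(s)$ for every $s\in\mathcal S$, and each term is well-defined.) Hence $\mathrm{Cot}(\phi_P)(\omega_{\bar v}) \neq 0$, so $\mathrm{Cot}(\phi_P)$ is surjective onto the one-dimensional target, which is the definition of formal immersion at $s_{1/\F_p}$.

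The one genuine subtlety — the step I expect to be the main obstacle to write cleanly rather than the main mathematical difficulty — is bookkeeping the passage between $\jg$ and $\je$ and between characteristic $0$ and characteristic $p$: one must be sure that "formal immersion" for $\phi_P$ into $\je_{\Z_p}$ can indeed be tested on the special fibre alone. The standard argument (as in \cite{Me05}, Section~4, or Mazur's original treatment \cite{Ma78}) is that $s_1$ lies in the smooth locus $Y_0(p)^{\mathrm{sm}}_{\Z_p}$, which is smooth over $\Z_p$, so its completed local ring is $\Z_p[[t]]$ with $t = j - j(P)$ a parameter in \emph{both} fibres simultaneously; a map of complete local $\Z_p$-algebras $\widehat{\OO}_{\je, \phi_P(s_1)} \to \Z_p[[t]]$ is surjective iff it is surjective modulo the maximal ideal of $\Z_p$, by Nakayama. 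So the mod-$p$ cotangent computation above already gives formal immersion over $\Z_p$. I would state this reduction briefly and refer to \cite{Me05} and \cite{Pa05} for the detailed verification, exactly as the surrounding text does, and then present the three-line argument of the preceding paragraph as the proof proper.
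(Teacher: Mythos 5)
Your proof is correct and follows exactly the route laid out in the paragraph preceding the proposition (the paper itself defers the proof to \cite{Pa05,Me05,Me01}): reduce $v$ mod $p$, use $I_e^\sharp v=0$ to descend $\omega_{\bar v}$ to $\mathrm{Cot}_0(\je_{\F_p})$, apply the displayed cotangent formula, and conclude by Nakayama over $\Z_p[[j-j(P)]]$. One small infelicity: the parenthetical claiming the \emph{source} is one-dimensional is off ($\mathrm{Cot}_0(\je_{\F_p})$ is not one-dimensional; it is the \emph{target} $\F_p\,dj$ that is), but your final paragraph states the Nakayama reduction correctly, so this does not affect the argument.
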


With the variant of Mazur's techniques explained above, this gives the corollary (see \cite{Me05} Proposition~6 for this formulation):
\begin{corollary}[\cite{Pa05, Me05}]\label{criterion}
  If for all  ordinary invariant $j_0\in\F_p$, there exists $v=\sum_{s\in\mathcal S}\lambda_s[s]\in\Z^{\mathcal S}$ such that $I_e^{\sharp}v=0$ and  $\sum_{s\in\mathcal S}\frac{\lambda_s}{j_0-j(s)}\neq 0$ in $\F_{p^2}$,   then $X_0^+(p^r)(\Q)$ is trivial for all $r>1.$ 
 \end{corollary}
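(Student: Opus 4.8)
The plan is to deduce Corollary~\ref{criterion} directly from Proposition~\ref{if} together with the variant of Mazur's techniques recalled just before it. Suppose, for contradiction, that some $X_0^+(p^r)(\Q)$ with $r>1$ contains a non-trivial point~$P$, i.e.\ a non-cuspidal, non-CM rational point. As explained above, $P$ produces a point $x\in Y_0(p^r)(K)$ with $K$ quadratic, and by Proposition~\ref{momose} the field $K$ is a quadratic field in which $p$ splits, and the two sections $s_1,s_2:\mathrm{Spec}(\ok)\to X_0(p)^{\mathrm{sm}}_\ok$ attached to $x_1=\pi_p\circ w_{p^r}(x)$ and $x_2=w_p\circ\pi_p(x)$ agree in the fibers over~$p$. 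Since $p$ splits in~$K$, a prime $\mathfrak p$ of $\ok$ above $p$ has residue field $\F_p$ and completion $\Z_p$, so $s_1$ localizes to a section in $Y_0(p)^{\mathrm{sm}}_{\Z_p}(\Z_p)$; let $j_0\in\F_p$ be the $j$-invariant of its reduction. This $j_0$ is an ordinary invariant, since by Proposition~\ref{momose}(1) the reduction of $s_1$ is not supersingular.

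First I would apply the hypothesis of the corollary to this particular $j_0$: it furnishes a vector $v=\sum_{s\in\mathcal S}\lambda_s[s]\in\Z^{\mathcal S}$ with $I_e^{\sharp}v=0$ and $\sum_{s\in\mathcal S}\lambda_s/(j_0-j(s))\neq 0$ in $\F_{p^2}$. Because $j(P)$ reduces to $j_0$ modulo $\mathfrak p$ and $j(s)\in\F_{p^2}$ for each supersingular $s$, the non-vanishing of $\sum_s\lambda_s/(j_0-j(s))$ in $\F_{p^2}$ is exactly the hypothesis needed in Proposition~\ref{if} (the statement there is insensitive to whether one writes $j(P)$ or its reduction $j_0$, both lying in $\F_{p^2}$). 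Hence Proposition~\ref{if} applies and $\phi_P$ is a formal immersion at ${s_1}_{/\F_p}$.

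Next I would invoke the variant of Mazur's method recalled above: since $\phi_P$ extends to $\phi_P:Y_0(p)^{\mathrm{sm}}_\ok\to\je_\ok$, since $\phi_P(s_1)$ and $\phi_P(s_2)$ coincide in characteristic~$p$, and since every section of the identity component $\je^0_\ok$ is torsion (\cite{Me05}, Proposition~2), the formal immersion property at ${s_1}_{/\F_p}$ forces $s_1=s_2$, hence $x_1=x_2$ in $X_0(p)(K)$. But $x_1=x_2$ means precisely that $\pi_p\circ w_{p^r}(x)=w_p\circ\pi_p(x)$, which as noted at the start of the subsection implies that $x$ is a CM point --- contradicting the choice of $P$ as non-CM. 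Therefore no such $P$ exists, i.e.\ $X_0^+(p^r)(\Q)$ is trivial for all $r>1$.

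The only genuinely delicate point is the passage from "$s_1$ localizes over a fixed prime $\mathfrak p\mid p$" to applying a criterion stated over $\Z_p$ and $\F_p$: this is where splitting of $p$ in $K$ (Proposition~\ref{momose}(2)) is essential, since it guarantees the local residue field is $\F_p$ rather than $\F_{p^2}$, so that $j_0$ is an element of $\F_p$ and the quantified hypothesis of the corollary --- which ranges only over $j_0\in\F_p$ --- actually covers the case at hand. Everything else is a bookkeeping assembly of Propositions~\ref{momose}, \ref{if} and the torsion statement from~\cite{Me05}. I would therefore present the proof as a short chain of implications, being careful to state explicitly at which step each part of Proposition~\ref{momose} is used.
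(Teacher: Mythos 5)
Your argument is correct and is essentially the same chain of implications the paper sketches just before stating the corollary: Proposition~\ref{momose} supplies the split prime and the ordinary reduction, Proposition~\ref{if} gives the formal immersion from the hypothesis on $j_0$, and the torsion property of sections of $\je^0_\ok$ forces $s_1=s_2$ and hence the CM contradiction. You have simply made explicit the assembly the paper leaves implicit (it cites \cite{Pa05,Me05} rather than writing out a proof), and your remark on why the quantifier over $j_0\in\F_p$ suffices --- namely that splitting of $p$ in $K$ makes the residue field $\F_p$ --- is exactly the right point to flag.
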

  
\begin{remark}\label{genjac}
The use of generalized jacobians is not necessary (and was not made in~\cite{Pa05} nor in~\cite{Re08}), but it
allows to give a neater formulation to the criterion of Proposition~\ref{critfinal} below. As an illustration, one can check that under this new form it 
readily gives triviality of~$X_0^+ (37)(\Q )$, for instance, whereas the previous version could not deal with this case 
and we had to invoke instead peculiar studies of level 37 by Hibino, Murabayashi, Momose and Shimura (cf.~\cite{HM97}, 
\cite{MS02}), as discussed in Section~6, page~9 of~\cite{Pa05}. 
\end{remark}

\subsubsection{Heegner-Gross vectors}
In \cite{Pa05}, the second named author made use of a formula of Gross to exhibit some elements $e_D\in\Z^{\mathcal S}$ such that $I_e^{\sharp}e_D=0$. Let indeed~$-D$ 
be a  quadratic imaginary discriminant and $\mathcal O_{-D}$ the order of discriminant~$-D.$ Let $s\in \mathcal S$ be the isomorphism class of a supersingular elliptic curve $E_s$ in characteristic $p$. The ring $R_s=\mathrm{End}_{\F_{p^2}}(E_s)$  is a maximal order of the quaternion algebra $\mathcal B$ ramified at $p$ and $\infty$. Moreover, the elements of $\mathcal S$ are in one-to-one correspondance with the set of maximal orders of $\mathcal B.$ The quadratic field $L=\Q(\sqrt{-D})=\mathcal O_{-D}\otimes \Q$ embeds in $\mathcal B$ if and only if $p$ is ramified or inert in $L$ and we then denote by $h_s(-D)$ the number of optimal embeddings of $\mathcal O_{-D}$ in $R_s$ modulo conjugation by $R_s^\times$ (an embedding is optimal if it does not extend to any larger order).  
We now define
\begin{equation}e_D=\frac 1{|\mathcal O_{-D}^\times|}\sum_{s\in \mathcal S}h_s(-D)[s]
\end{equation} which we consider as an element of $\frac 1{12} \Z^\mathcal S.$ 

\begin{proposition}[\cite{Pa05, Me05}]\label{gross}
We have $I_e^{\sharp} e_D=0$. 
\end{proposition}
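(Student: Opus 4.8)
The plan is to prove that the Heegner--Gross vector $e_D$ lies in the kernel of the winding ideal $I_e^\sharp$, i.e.\ that $I_e^\sharp e_D = 0$ in $\frac1{12}\Z^{\mathcal S}$. Since $I_e^\sharp$ is generated over the Hecke algebra by the winding element, it suffices to show that for every Hecke operator $T_n$ the component of $e_D$ along the ``winding'' part of $\F_p^{\mathcal S}\cong \mathrm{Cot}_0(\jg_{\F_p})$ vanishes; concretely, one checks that pairing $e_D$ against eigenforms detects only the Eisenstein part, which is annihilated by $I_e^\sharp$ by definition of the winding quotient. The key input is Gross's formula (from \cite{Gr80}, in the form used in \cite{Pa05} and \cite{Me05}): the generating series $\sum_{D} h_s(-D)\, q^D$, suitably normalized and summed with the Brandt-matrix/Hecke action in mind, is a modular form whose Hecke eigenvalues match those of a weight-$2$ form, so that the vector $\bigl(h_s(-D)\bigr)_{s\in\mathcal S}$ is, up to the normalizing factor $|\mathcal O_{-D}^\times|$, an eigenvector-combination for the Hecke action on $\Z^{\mathcal S}$ with the ``right'' eigenvalues.

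First I would recall the identification of $\F_p^{\mathcal S}$ (equivalently the free module on supersingular points, or on maximal orders of $\mathcal B$) with $\mathrm{Cot}_0(\jg_{\F_p})$ as a Hecke module, which is stated in the excerpt just before Proposition~\ref{if}; under this identification the Brandt matrices $B(n)$ act as the Hecke operators $T_n$. Next I would invoke Gross's theorem expressing, for fixed $s$, the quantity $h_s(-D)$ in terms of coefficients of a theta series attached to the ternary quadratic form given by the norm form on the trace-zero part of $R_s$ (or the relevant Eichler order), and more importantly the fact that the \emph{matrix} $\bigl(h_s(-D)\bigr)$ is an eigenvector combination: for each normalized newform $f$ of weight $2$ and level $p$ with $\varepsilon_p$-eigenvalue of $w_p$ equal to $+1$, the $f$-isotypic projection of $\sum_s h_s(-D)[s]$ is a nonzero multiple of $L(f,1)\cdot a_D(f)$-type quantity, hence vanishes \emph{when $L(f,1)\ne 0$} — but this is exactly the complement of the winding quotient. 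Thus the $f$-component of $e_D$ is nonzero only for $f$ in the winding quotient, which is precisely the statement that $I_e^\sharp e_D = 0$, since $I_e^\sharp$ is by construction the annihilator of $\je$ and $\je$ is the quotient corresponding to forms with $L(f,1)\ne 0$.

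The main obstacle is that this is not really a self-contained computation but a bookkeeping exercise linking three things: (i) Gross's period formula relating $h_s(-D)$ to central $L$-values, (ii) the definition of the winding quotient $\je$ as the maximal quotient of $J_0(p)$ (or its generalized version $\jg$) on which the winding element acts trivially on the dual, equivalently the quotient by the abelian subvariety where $L(f,1)=0$, and (iii) the Hecke-equivariance of the isomorphism $\F_p^{\mathcal S}\cong \mathrm{Cot}_0(\jg_{\F_p})$. The cleanest route, and the one I would follow, is simply to cite the precise statements: Gross's formula as packaged in \cite[\S II--III]{Gr80} (or the exposition in \cite{Pa05}), and the identification of $I_e^\sharp$-torsion with the winding quotient as in \cite[proof of Proposition~4]{Me05}. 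Combining these, the claim $I_e^\sharp e_D = 0$ follows formally, and I would present the proof as a short deduction from these two references rather than reproving Gross's formula. The only genuinely delicate point to get right is the normalization by $1/|\mathcal O_{-D}^\times|$ and the factor $\frac1{12}$, which are needed so that $e_D$ lands in $\frac1{12}\Z^{\mathcal S}$ with integral image under the relevant operators; I would check this against the mass-formula normalization $\sum_s 1/|R_s^\times| = (p-1)/24$ to confirm consistency, but would not belabor it.
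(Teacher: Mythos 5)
The paper's own ``proof'' is a bare citation to Proposition~4.1 of \cite{Pa05} and to Proposition~5 and the Corollary of Theorem~6 of \cite{Me05}. You correctly identify the two ingredients lying behind those references --- Gross's formula expressing the $f$-isotypic projection of $e_D$ in terms of central $L$-values, together with the characterization of $\je$ as the quotient attached to forms with $L(f,1)\ne 0$ --- and you conclude by saying you would simply cite them, so the route matches the paper's.

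However, several details of the sketch are reversed or misstated. First, $I_e^\sharp$ is the \emph{annihilator} of the winding element in the Hecke algebra, not the ideal it generates. In your first paragraph the logic then runs backwards: for $I_e^\sharp e_D=0$ one needs $e_D$ to be \emph{supported} in the isotypic components surviving in $\je$ (i.e.\ to have vanishing $f$-projection precisely when $f$ is killed in the winding quotient), not to have vanishing component ``along the winding part''; and it is false that $e_D$ detects only the Eisenstein part --- it has nonzero cuspidal projections for every $f$ with $L(f,1)\,L(f\otimes\chi_{-D},1)\ne 0$. In the second paragraph, ``vanishes when $L(f,1)\ne 0$'' should read ``vanishes when $L(f,1)=0$'': since Gross's formula makes $\langle e_D,e_D\rangle_f$ proportional to $L(f,1)\,L(f\otimes\chi_{-D},1)$ and the pairing on $\Z^{\mathcal S}$ is positive definite, the $f$-projection dies exactly when $L(f,1)=0$, which are the forms killed in $\je$; the conclusion you then state is correct, so this is a slip rather than a conceptual error. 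Two smaller points: the Gross formula you need is from \cite{Gr87}, not \cite{Gr80}; and the forms surviving in $\je$ have $w_p$-eigenvalue $-1$ (functional-equation sign $+1$), not $+1$. None of this changes the overall strategy, which coincides with the paper's, but the sketch as written contains enough direction and sign errors that it would not survive careful reading.
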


This is a slightly modified version of Proposition~4.1 of \cite{Pa05} as explained in \cite{Me05},    Proposition~5 and Corollary of Theorem~6  (see Remark~\ref{genjac}).

The $h_s(-D)$ optimal embeddings of $\mathcal O_{-D}$ in $R_s$ modulo conjugation by $R_s^\times$ are in one-to-one correspondence with the pairs $(E,f)$, where  $E$ is an elliptic curve with CM by $\mathcal O_{-D}$, which are isomorphic to $E_s$ in characteristic $p$ and $f$ is  a given  isomorphism $\mathcal O_{-D}\cong \mathrm{End}(E)$ (see for instance \cite{Gr87}).  So for  $p$ inert or ramified in $L$, the vector $e_D$ is  the sum of isomorphism classes of elliptic curves which are the  reduction  in characteristic $p$ of   elliptic curves having CM by $\mathcal O_{-D}$. The differential associated to $e_D$ is then just equal to the mod $p$ logarithmic derivative: 
$$
\frac{H_{-D}'(j)}{H_{-D}(j)}dj,
$$ 
where $H_{-D}=\prod_{E ; \mathrm{End}(E)\cong\mathcal O_{-D}}(X-j(E))$ is the Hilbert class polynomial associated 
with $-D$.  Applying this to Corollary~\ref{criterion} we obtain the following criterion (recall we always assume ${p\geq 11}$, ${ p\neq 13}$). 
\begin{proposition}\label{critfinal} If for all  ordinary invariant $j_0\in\F_p$, there exists a quadratic imaginary discriminant $-D<0$ such that $p$ is inert or 
ramified in $\Q(\sqrt{-D})$ and  $H_D'(j_0)\neq 0$  in $\F_p$, then $X_0^+(p^r)(\Q)$ is trivial for all integers ${r>1}$. 
 \end{proposition}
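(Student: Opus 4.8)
The plan is to deduce Proposition~\ref{critfinal} from Corollary~\ref{criterion} by taking, for each ordinary $j_0 \in \F_p$, the Gross--Heegner vector $v = e_D$ associated with a suitable discriminant $-D$. First I would recall that by Proposition~\ref{gross} we have $I_e^{\sharp}e_D = 0$ whenever $-D$ is a quadratic imaginary discriminant (provided $p$ is inert or ramified in $\Q(\sqrt{-D})$, which is exactly the condition ensuring that $L = \Q(\sqrt{-D})$ embeds in the quaternion algebra $\mathcal B$, so that the vector $e_D$ is actually supported on $\mathcal S$). Strictly speaking $e_D$ lies in $\frac{1}{12}\Z^{\mathcal S}$ rather than $\Z^{\mathcal S}$, so I would clear the denominator and work with $12\,e_D \in \Z^{\mathcal S}$; this does not affect the vanishing condition $I_e^{\sharp}(12\,e_D)=0$ nor — since $12$ is invertible in $\F_p$ for $p \geq 11$ — the nonvanishing condition in $\F_{p^2}$.

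The second step is to identify the differential attached to $e_D$. By the dictionary between optimal embeddings and CM elliptic curves recalled just before the statement (following \cite{Gr87}), when $p$ is inert or ramified in $L$ the coefficient $h_s(-D)$ counts exactly the CM elliptic curves with CM by $\mathcal O_{-D}$ whose reduction mod $p$ is $E_s$; hence $e_D$ (up to the factor $1/|\mathcal O_{-D}^\times|$, which is a unit in $\F_p$) is the reduction mod $p$ of the formal sum of those CM points. Consequently the associated differential $\mathrm{Cot}(\phi_P)(\omega_{e_D})$ is, up to a nonzero scalar in $\F_p$, the logarithmic derivative
$$
\frac{H_{-D}'(j)}{H_{-D}(j)}\,dj,
$$
where $H_{-D}$ is the Hilbert class polynomial. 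This uses the formula $\mathrm{Cot}(\phi_P)(\omega_v) = \sum_{s \in \mathcal S}\frac{\lambda_s}{j(P)-j(s)}\,dj$ from the discussion preceding Proposition~\ref{if}, together with the elementary identity $\frac{H'_{-D}(j)}{H_{-D}(j)} = \sum_{E}\frac{1}{j - j(E)}$ applied to the roots $j(E)$ of $H_{-D}$, which reduce mod $p$ precisely to the supersingular $j$-invariants occurring in $e_D$ with the right multiplicities.

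The final step is to feed this into Corollary~\ref{criterion}. Fix an ordinary $j_0 \in \F_p$ and a discriminant $-D$ as in the hypothesis; since $j_0$ is ordinary it is not a root of $H_{-D}$ mod $p$ (the roots of $H_{-D}$ mod $p$ are supersingular when $p$ is inert or ramified in $\Q(\sqrt{-D})$), so $H_{-D}(j_0) \neq 0$ in $\F_p$ and the quantity $\sum_{s}\frac{\lambda_s}{j_0 - j(s)}$ equals $H_{-D}'(j_0)/H_{-D}(j_0)$ up to a unit. The hypothesis $H_{-D}'(j_0) \neq 0$ in $\F_p$ therefore gives a nonzero value — and it is nonzero already in $\F_p \subseteq \F_{p^2}$, which is stronger than what Corollary~\ref{criterion} demands. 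Applying Corollary~\ref{criterion} then yields triviality of $X_0^+(p^r)(\Q)$ for all $r > 1$. The only delicate point — and the one I would be most careful about — is the bookkeeping of scalar factors ($1/|\mathcal O_{-D}^\times|$, the power of $12$, and the constant in the identification of $\mathrm{Cot}_0(\jg_{\F_p})$ with $\F_p^{\mathcal S}$), to be sure that none of them vanishes mod $p$ under the standing assumption $p \geq 11$, $p \neq 13$; once that is checked, the nonvanishing of the differential is equivalent to $H_{-D}'(j_0) \neq 0$ and the proposition follows.
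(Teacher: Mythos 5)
Your proof is correct and follows exactly the route the paper itself takes: take $v = e_D$ (cleared of denominators), invoke Proposition~\ref{gross} for $I_e^{\sharp}e_D = 0$, identify the associated differential with the logarithmic derivative $H_{-D}'(j)/H_{-D}(j)\,dj$ via the optimal-embedding/CM-reduction dictionary, and feed this into Corollary~\ref{criterion} using that an ordinary $j_0$ is not a zero of $H_{-D}$ mod $p$. Your extra care with the normalizing scalars ($12$, $|\mathcal O_{-D}^\times|$) under the standing hypothesis $p\ge 11$, $p\neq 13$ is exactly the right bookkeeping and is implicit in the paper's terser presentation.
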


\subsection{The sieve}\label{explsieve}
We actually use even  a more restrictive criterion.

\begin{corollary}\label{sieve}
Let $-D$ be a fundamental quadratic imaginary discriminant and $\chi_D$ the associated
quadratic Dirichlet character. For a positive integer $c$, write  $R_{c,D} :={\mathrm{Res}} (H'_{-D} ,H'_{-c^2 D} )$ the integer resultant. Suppose that  $p>11,\  p\neq 13$ is a prime 
such that $\chi_D(p)=0$ or $-1$ and\footnote{We use the ``French'' notation $[\![a,b]\!]$ for the set of integers $x$ satisfying $a\le x\le b$.}  $p\nmid r_D:={\mathrm{gcd}}(R_{c,D}; c\in [\![2,7]\!])$.    Then $X_0^+(p^r)(\Q)$ is trivial for all integer $r>1.$
\end{corollary}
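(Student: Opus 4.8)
The plan is to deduce Corollary~\ref{sieve} from Proposition~\ref{critfinal} by exhibiting, for each ordinary invariant $j_0 \in \F_p$, a suitable discriminant among the finitely many candidates $-D, -4D, -9D, \dots, -49D$. Concretely, fix a fundamental discriminant $-D$ with $\chi_D(p) \in \{0,-1\}$; then $p$ is inert or ramified in $\Q(\sqrt{-D})$, and the same is true for every nonmaximal order $\mathcal O_{-c^2D}$ inside it, so each discriminant $-c^2 D$ ($c \in [\![1,7]\!]$, including $c=1$) is legitimate input to Proposition~\ref{critfinal}. Thus it suffices to show that under the hypothesis $p \nmid r_D$, for every $j_0 \in \F_p$ there is some $c \in [\![1,7]\!]$ with $H'_{-c^2 D}(j_0) \neq 0$ in $\F_p$.

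The key step is a contrapositive/resultant argument. Suppose, for contradiction, that some ordinary $j_0 \in \F_p$ is a common root modulo $p$ of all the derivatives $H'_{-c^2 D}$ for $c \in [\![1,7]\!]$. In particular $j_0$ is a common root mod $p$ of $H'_{-D}$ and $H'_{-c^2 D}$ for each $c \in [\![2,7]\!]$; hence the resultant $R_{c,D} = \mathrm{Res}(H'_{-D}, H'_{-c^2 D})$ vanishes mod $p$ for each such $c$ (a common root of two polynomials over the field $\F_p$ forces their resultant to be zero, using that the leading coefficients of Hilbert class polynomials are $1$, so no degeneration of degree occurs on reduction). Therefore $p \mid R_{c,D}$ for all $c \in [\![2,7]\!]$, so $p \mid \gcd(R_{c,D}; c \in [\![2,7]\!]) = r_D$, contradicting the hypothesis $p \nmid r_D$. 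Consequently no such $j_0$ exists, the hypothesis of Proposition~\ref{critfinal} is satisfied, and $X_0^+(p^r)(\Q)$ is trivial for all $r > 1$.

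There is a small gap to patch: Proposition~\ref{critfinal} quantifies over \emph{ordinary} invariants $j_0 \in \F_p$, whereas the resultant argument, as stated, produces a contradiction for \emph{any} common root in $\F_p$; this is harmless, since we only need to rule out the ordinary ones, and ruling out all of them is a fortiori enough. One should also note that we are free to use $c = 1$ (i.e.\ $-D$ itself) as one of the admissible discriminants, which is what lets the family $\{H'_{-c^2D} : c \in [\![1,7]\!]\}$ rather than only $c \geq 2$ appear in Proposition~\ref{critfinal}; the resultants in the definition of $r_D$ pair each $c \geq 2$ against $c = 1$ precisely to detect a simultaneous vanishing.

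The main obstacle, such as it is, is conceptual bookkeeping rather than a genuine difficulty: one must be careful that reduction mod $p$ does not drop the degree of any $H_{-c^2D}$ (it does not, they are monic) so that $\mathrm{Res}(H'_{-D}, H'_{-c^2D}) \bmod p$ genuinely equals the resultant of the reductions, and hence vanishes exactly when the reductions share a root in $\overline{\F_p}$. Given that, the implication from ``$j_0$ kills every $H'_{-c^2D}$ mod $p$'' to ``$p \mid r_D$'' is immediate, and the corollary follows directly from Proposition~\ref{critfinal}. \qed
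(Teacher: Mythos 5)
Your proof is correct and is essentially the paper's own argument, just organized as a contrapositive (fix $j_0$ and derive $p\mid R_{c,D}$ for all $c$) where the paper fixes a single $c$ with $p\nmid R_{c,D}$ and deduces that for every $j_0$ one of $H'_{-D}(j_0)$, $H'_{-c^2D}(j_0)$ is nonzero. One small inaccuracy in your parenthetical: monicity of $H_{-c^2D}$ does \emph{not} prevent the degree of the derivative $H'_{-c^2D}$ from dropping mod $p$ (its leading coefficient is the class number $h(-c^2D)$, which could a priori be divisible by $p$); fortunately this is harmless, because the implication you actually use -- a common root of the reductions forces the Sylvester determinant, hence $R_{c,D}\bmod p$, to vanish -- holds regardless of degree drops, it is only the converse that can fail.
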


\begin{proof}
Let $p$ be a prime as in the proposition. Then there exists $c\in[\![2,7]\!]$ such that $R_{c,D}\neq 0\mod p$. (This range of conductors is of course only motivated 
by our computational needs.) So  for all ordinary $j_0\in \F_p$ either $H_{-D}'(j_0)$ or $H_{-c^2D}'(j_0)$ is non-zero. Moreover,   $p$ is inert or ramified in $\Q(\sqrt{-D})$. 
The result follows from Proposition~\ref{critfinal}. \qed
\end{proof} 

\medskip
We are now ready to state our algorithm.

\paragraph{ALGORITHM, part I:} 

 Fix a bound $N$ and a list $\cal D$ of  quadratic imaginary discriminants: in the sequel, we eventually take $N=10^{14}$ and choose the discriminants $-D$  of 
 class number $h(-D)\leq 4$ (and also $-D=-87$ which is of class number $6$) to obtain Hilbert class polynomials of  small degree $\mathrm{deg}(H_{-D})=h(-D)$. For each $-D\in \cal D$
 we compute the prime factors $\neq 13$ in $[\![11,N-1]\!]$ of $r_D$.  In this way, we construct step by step a list $\mathcal L$ of fundamental quadratic imaginary discriminants and  a list $\bad$ of prime numbers having the following property: 
\begin{center}$(\star)\quad$\emph{ if $p<N$ is a prime number such that  $p\not \in \bad $ and  $\chi_d(p)\in\{0,-1\}$ for some ${-d\in \mathcal L}$,   then $X_0^+(p^r)(\Q) $ is trivial for all $r>1.$}
\end{center}
We also construct a list $\good$ which is useful within the procedure (see below). 

\medskip
\noindent \textbf{Details:} 
\begin{enumerate}
\item If the class number is one, then $H_{-D}$ is of degree one and unitary so $H_{-D}'=1$. We initialize $\mathcal L$ to $\mathcal L=\{ -3,-4,-7,-8,-11,-19,-43,-67,-163\}$, 
and $\good$ and $\bad$ to the empty lists.
\item Let  $-D\in\mathcal D$  not yet in $\mathcal L$ and  $p \in[\![11,N]\!]$ a prime factor of  $ r_D$. If $p$ is not yet in $\good$ nor in $\bad$, then for 
all ${-d\in\mathcal L}$ we have ${p\nmid r_d}$ (if $h(-d)=1$, it is because $H_{-d}'=1$ and if $h(-d)>1$ it follows from the step-by-step construction of $\mathcal L$).  
So, if $\chi_d(p)=0$ or $-1$ for some $-d\in\mathcal L$, then we  put $p$ in the list $\good$; else we put it in $\bad$. We add $-D$ to $\mathcal L$ and start again to  (ii) 
(unless $\mathcal L=\mathcal D$).
\end{enumerate}
 
 \noindent\textbf{Results:} We take  $\mathcal D$ to be the list of quadratic imaginary discriminants of  class number in $[\![1,4]\!]$ to which we add $-87$ (see~Appendix) and $N=10^{14}$.  We obtain ($\mathcal L=\mathcal D$ and) $\bad =\varnothing$.  Thus if a prime $11\leq p< 10^{14},\ p\neq 13 $ is such that $\chi_D(p)=0$ or $-1$  for some $-D\in\mathcal D$,  then $X_0^+(p^r)(\Q)$ is trivial for all $r>1.$
 
 \paragraph{ALGORITHM, part II:} In this part, we construct the list $\verybad$ of ``very bad primes'', that is,  the primes $11\leq p<10^{14}$ which split in $\Q(\sqrt{-D})$ for all $-D\in\mathcal D=\mathcal L$. For such primes, we indeed cannot establish the triviality of $X_0^+(p^r)(\Q)$. 
For  this, we refine the  ``trial search'' naive idea as follows.
\begin{enumerate}
 \item We consider   a sublist $\mathcal D' $ of  $\mathcal D$ for which we compute explicitely the values of congruences of primes which split for all $-d\in\mathcal D'$. In practice, we take $$\mathcal D'=\{ -3,-4,-15,-20,-7,-11,-39,-52,-51,-68,-19,-23,-87\} .$$ 
 Since $-4$ and $-3$ are in $\mathcal D'$, a prime $p$ splits for all $-d\in\mathcal D'$ if and  only if $p$ is a non-zero square modulo $q$ for all $q\in\mathcal L'=\{3,4,5,7,11,13,17,19,23,29\}.$  Note that we precisely chose the subset $\mathcal D'$ because, except for $-4$, this is the list of the quadratic imaginary discriminants corresponding to the first nine odd prime numbers. 
 The first twelve discriminants of $\mathcal D'$ are of class number not exceeding~$4$, and $-87$ is of class number~$6$.
 We define 
 $$
 M =
3\times 4\times 5\times 7\times 11\times 13\times 17\times 19\times 23\times  29 =
12\, 939\, 386\, 460.
$$  
There are $1\,†995\, 840$ values of congruences modulo $M$ which are  non-zero squares modulo $q$ for all $q\in\mathcal L'.$  The representatives in the range 
$[\![ 0,M-1]\!]$ of those values make a list $\cal S$. Concretely, to find $\mathcal S,$ we make a list of all non-zero squares modulo $q$ for each $q\in\mathcal L'$ and use the Chinese Remainder Theorem. 
 \item For each value $a\in\mathcal S$  and each integer $p\equiv a \pmod M$ in the range $[\![11,N]\!] ,$ if $p$ is pseudoprime, we test if $\chi_D(p)=1$ for all $-D\in\mathcal D\backslash \mathcal D'$. If it is and  if $p$ is indeed prime we put it in $\verybad$. 
\end{enumerate}

\noindent\textbf{Results:} with $\mathcal D'$ as before and $N=10^{14}$, we obtain $\verybad=\varnothing.$ 

\medskip
The output of this is the following.  
\begin{proposition}
\label{output}
If~$p$ is a prime number, ${11\le p<10^{14}}$ and ${p\ne 13}$, then
$X_0^+ (p^r )(\Q )$ for $r>1$ consist of cusps and CM points. 
\end{proposition}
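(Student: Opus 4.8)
The plan is simply to assemble the two halves of the sieve developed in this section. The engine is Corollary~\ref{sieve}, which reduces the triviality of $X_0^+(p^r)(\Q)$ for all $r>1$ to a purely arithmetic condition on $p$: the existence of a fundamental imaginary quadratic discriminant $-D$ with $\chi_D(p)\in\{0,-1\}$ and $p\nmid r_D$, where $r_D=\gcd(R_{c,D};c\in[\![2,7]\!])$. ALGORITHM~part~I runs this over the list $\mathcal D$ of discriminants of class number at most~$4$ (together with $-87$), and by construction yields property $(\star)$: every prime $11\le p<10^{14}$, $p\ne 13$, with $p\notin\bad$ and $\chi_d(p)\in\{0,-1\}$ for at least one $-d\in\mathcal L$ has $X_0^+(p^r)(\Q)$ trivial for all $r>1$. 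The reported output of that computation is $\mathcal L=\mathcal D$ and, crucially, $\bad=\varnothing$; so the conclusion of $(\star)$ actually holds for every prime $p$ in the range that is inert or ramified in at least one of the fields $\Q(\sqrt{-D})$, $-D\in\mathcal D$.

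What remains is to rule out the primes in the range that split in \emph{all} of these fields; this is the purpose of ALGORITHM~part~II, whose output $\verybad=\varnothing$ asserts there are none. I would present this step by first recording the congruence reduction that makes the search tractable: since $-3$ and $-4$ belong to the sublist $\mathcal D'$, a prime $p$ splitting in every $\Q(\sqrt{-d})$ with $-d\in\mathcal D'$ is exactly a prime that is a nonzero square modulo each $q\in\{3,4,5,7,11,13,17,19,23,29\}$, hence lies in one of the $1\,995\,840$ explicit residue classes modulo $M=12\,939\,386\,460$ collected in the list $\mathcal S$ via the Chinese Remainder Theorem. Only the candidates $p\equiv a\pmod M$ with $a\in\mathcal S$ and $p\in[\![11,10^{14}-1]\!]$ need be examined: for each such $p$ that is a probable prime, test whether $\chi_D(p)=1$ for all remaining $-D\in\mathcal D\setminus\mathcal D'$, and if so, certify primality before placing $p$ in $\verybad$. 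The computation returns $\verybad=\varnothing$, i.e., no prime $11\le p<10^{14}$, $p\ne13$, splits in all the fields $\Q(\sqrt{-D})$, $-D\in\mathcal D$.

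The two outputs now combine at once. Let $p$ be any prime with $11\le p<10^{14}$ and $p\ne13$. By $\verybad=\varnothing$ there is some $-D\in\mathcal D$ with $\chi_D(p)\in\{0,-1\}$; since also $\bad=\varnothing$, property $(\star)$ applies to $p$, so $X_0^+(p^r)(\Q)$ consists of cusps and CM points for every $r>1$. This is the assertion of Proposition~\ref{output}.

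I expect the real difficulty to be neither Corollary~\ref{sieve} nor the final bookkeeping, but the rigor and feasibility of the two computations themselves — exact evaluation of the resultants $R_{c,D}$ and hence of the $r_D$, correctness of the Chinese-Remainder reduction so that no very bad prime escapes the residue classes in $\mathcal S$, and an error-free sweep of the roughly $7.7\times10^3$ values of $p$ in each of the $\sim2\times10^6$ admissible classes. The mathematically substantive point is precisely the congruence trick of part~II, which is what brings a search over primes up to $10^{14}$ within reach of an (even modest) computer, as stressed at the start of Section~\ref{Gross}.
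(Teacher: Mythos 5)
Your proof is correct and follows the same route as the paper: invoke the output $\verybad=\varnothing$ of Part~II to see that every prime in the range is inert or ramified in some $\Q(\sqrt{-D})$ with $-D\in\mathcal D$, then apply property $(\star)$ from Part~I together with $\bad=\varnothing$ to conclude. The extra discussion of the congruence reduction and of where the computational burden lies is accurate but goes beyond what the paper records in its proof of the proposition.
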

Together with Theorem~\ref{tse} we obtain Theorem~\ref{MT} of the introduction:
\begin{corollary}
The same conclusion as for Proposition~\ref{output} is true for $X_0^+ (p^r )(\Q )$ with
$p\geq 11$, $p\neq 13$ (and~$r>1$).
\end{corollary}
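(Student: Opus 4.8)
The plan is to assemble the two complementary results already at our disposal: the explicit isogeny bound of Theorem~\ref{tse}, which disposes of all large primes, and the Heegner--Gross computation recorded in Proposition~\ref{output}, which disposes of all primes below $10^{14}$. First I would reduce to the two cases $r=2$ and $r=3$. As recalled in the introduction (see \cite{Mo86}, p.~443), the degeneracy morphisms $X_0^+(p^{r+2})\to X_0^+(p^{r})$ exist over $\Q$; given a non-cuspidal rational point $P$ on $X_0^+(p^{r})$ with $r\ge 4$, its image $P'$ under $X_0^+(p^{r})\to X_0^+(p^{r-2})$ is a rational point on a curve of lower level $r-2\ge 2$, and the elliptic curve underlying $P'$ is isogenous to that underlying $P$. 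Since cusps are sent to cusps and having CM is an isogeny invariant, an induction on $r$ (with base cases $r=2,3$) shows that $P'$ trivial forces $P$ trivial. Hence it suffices to prove the statement for $r\in\{2,3\}$.

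For $r=2$: Theorem~\ref{tse}(\ref{i2}) shows that every point of $X_0^+(p^2)(\Q)$ is a cusp or a CM point as soon as $p>1.4\cdot 10^{7}$, while Proposition~\ref{output} gives the same conclusion for every prime $p$ with $11\le p<10^{14}$ and $p\ne 13$. Because $1.4\cdot 10^{7}<10^{14}$, these two ranges overlap, so together they cover all primes $p\ge 11$ with $p\ne 13$. For $r=3$ one argues identically: Theorem~\ref{tse}(\ref{i3}) handles $p>1.7\cdot 10^{11}$, Proposition~\ref{output} handles $11\le p<10^{14}$ with $p\ne 13$, and since $1.7\cdot 10^{11}<10^{14}$ the two ranges again overlap and exhaust the primes in question. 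This proves the corollary, which is exactly Theorem~\ref{MT}.

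I do not expect any genuine obstacle here: all the substantive work — the explicit Runge estimates of Section~\ref{Runge}, the explicit isogeny bound of Section~\ref{Isogeny}, and the sieve computation of Section~\ref{Gross} — has been carried out beforehand, and the corollary is merely the bookkeeping that glues them together. The one point deserving a line of care is the reduction from arbitrary $r>1$ to $r\in\{2,3\}$, namely that the degeneracy morphisms both send trivial points to trivial points and reflect triviality; this is elementary and follows from the modular interpretation, the underlying elliptic curve being preserved up to a $p$-power isogeny.
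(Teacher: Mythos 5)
Your proposal is correct and follows the same route as the paper: the corollary is just the union of Proposition~\ref{output} (all $r>1$, $11\le p<10^{14}$, $p\ne 13$) with Theorem~\ref{tse} ($r=2,3$, large $p$), using the degeneracy morphisms $X_0^+(p^{r+2})\to X_0^+(p^{r})$ recalled in the introduction to reduce arbitrary $r>1$ to $r\in\{2,3\}$. The paper disposes of it in one line ("Together with Theorem~\ref{tse} we obtain Theorem~\ref{MT}"); you have merely spelled out the bookkeeping, including the (correct) observation that the degeneracy maps send non-cusps to non-cusps and that CM is isogeny-invariant, so triviality descends.
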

\begin{proof} {\bf (of Proposition~\ref{output}).} 
By Part II of the algorithm, since  ${\verybad=\varnothing}$, then any prime  $p\geq 11$, $13\neq p<10^{14}$ is inert or ramified in $\Q(\sqrt{-D})$ for 
some $-D\in\mathcal D$. We conclude by Part~I~$(\star)$  since ${\bad=\varnothing}$. 
\qed
\end{proof}

  \begin{remark}\label{verysmall}
\label{Xsplit(13)}

We close this paper by discussing the cursed level 13. As explained in the introduction, the question of the rational points on~$X_0^+ (169)\cong
X_\spl (13)$ is the only remaining open case among the $X_0^+ (p^r )$ for~$r>1$. We do not prove anything new here, but try to use this ``stubbornly 
resisting" example (according to Darmon's expression) to illustrate in details many of the tools used all over the paper.

  First recall that for all prime $p$, the Jacobian~$J_{\mathrm{nonsplit}} (p)$  of the curve
$X_{\mathrm{nonsplit}} (p)$ associated to the normalizer of a nonsplit Cartan subgroup mod $p$ is isomorphic to the newpart $J_0^{+,
\mathrm{new}}(p^2 )$ of the Jacobian $J_0^{+} (p^2 )$ of~$X_0^+ (p^2 )$ (see~\cite{Ch04}). On the other hand, one knows that~$J_0^{+} (p^2 )$
decomposes up to isogeny as
$$
J_0^+ (p^2 ) \sim J_0 ( p) \times J_0^{+, \mathrm{new}}(p^2 ) \sim J_0(p) \times J_{\mathrm{nonsplit}} (p)
$$
(see e.g.~\cite{Mo86}, p. 444). The~$J_0 (p)$ factor in the above decomposition, and more precisely its~$J_0^- (p)$, $J_e (p)$ and $\tilde{J} (p)$
successive subquotients, play a crucial role in our techniques, as they allow to use Mazur's method in order to prove integrality of rational points; as is
well-known, the absence of such quotients is one of the main problems with the case of $X_{\mathrm{nonsplit}} (p)$ or $X_0^+ (p)$.

  Now when $p=13$ one has $J_0 (13)=0$, so the jacobians of $X_{\mathrm{nonsplit}} (p)$ and $X_\spl (p)$ are isogenous. (In
prime level, this is the only case where this interesting phenomenon occurs, as everything is $0$ for $p=2,3,5,7$, i.e. the other $p$'s for
which $g(X_0(p))=0$). Actually more is true: Burcu Baran proved by computing explicit equations that the two above curves are actually
{\it isomorphic} over~$\Q$ (see~\cite{Ba11}). One therefore now faces difficulties of ``nonsplit type''. Our curve is of genus 3, and its jacobian should
be of same rank over~$\Q$, so not only Mazur's method, but also Chabauty's method is of no help here. The thirteen quadratic imaginary
orders with class number one split, according to the decomposition of the number 13 in them, into seven points in $X_{\mathrm{nonsplit}}
(13)(\Q )$ and six points in $X_\spl (13)(\Q )$. (The rational cusp of the latter restores the balance with~$X_{\mathrm{nonsplit}} (13)$).
Galbraith~\cite{Ga02} and Baran~\cite{Ba11} checked there are no rational points but the trivial ones, in a big box (whose size they do not
specify however), but to conclude that there are no point at all we would need some effective Mordell, at least for that particular curve. Our
Theorem~\ref{th1} can still be used as an approximation for {\it integral} points (yielding that their Weil height~$\height (j)$ is bounded
by~$76.4$ - this can be lowered by optimizing the estimations in the proof of Theorem~\ref{th1}), but again we cannot go further by lack of
integrality results... Perhaps the techniques of~\cite{BS10} could be of some help here.
\end{remark}

\section{Appendix : tables and algorithms}

\begin{itemize}
\item\textbf{Quadratic imaginary discriminants  of  class number  in the range $[\![ 1,4 ]\!]  :$}

\noindent Class number $1$: \\
\begin{tabular}{l@{}l}
  $-\{$&$3,4,7,8,11,19,43,67,163\}$
\end{tabular}

\noindent Class number $2$:\\
\begin{tabular}{l@{}l}
 $-\{$&$20,24,40,52,15,88,35,148,51,232,91,115,123,187,235,267,403,427\}$ 
\end{tabular}

\noindent Class number $3$: \\
\begin{tabular}{l@{}l}
$-\{$&$23,31,59,83,107,139,211,283,307,331,379,
499,547,643,883,907\}$
\end{tabular}

\noindent Class number $4$: \\
\begin{tabular}{l@{}l}
$ - \{$&$56,68,84,120,132,136,39,168,184,55,228,280,292,312,328,
340, 372,388,408,520,$\\
&$532,568,
155,708,760,772,195,203,219,1012,259,291,323,355,435,483,555,595,627,$\\
&$667,715,723,763,
795,955,1003,1027,1227,1243,1387,1411,1435,1507,1555\}$  
\end{tabular}

%
%
\item\textbf{Algorithms} :  we reproduce here the pseudo-codes of the algorithms described in~Section~\ref{explsieve}. The original codes have been written with  
Sage~\cite{Sage}. We  used the \textsf{hilbert\_class\_polynomial}  function to compute $H_{-D}$ and the \textsf{crt} function to apply Chinese Remainder Theorem.

\textbf{Algorithm, Part I: } 

\textbf{bad\_discrim\_and\_primes($\mathcal D,N$)}
\begin{algorithmic}[1]
\REQUIRE A list $\mathcal D$ of imaginary quadratic discriminants and an integer $N>1$ (as in Section~\ref{explsieve}).
 \STATE set $L\leftarrow [3,4,7,8,11,19,43,67,163],\quad  \bad\leftarrow [ ],\quad\mbox{and}\quad \good\leftarrow [  ].$
\FOR{$d $ in $ \mathcal D$}
       \STATE \label{boucle} set    $G\leftarrow H_{-d}'$
       \STATE compute the prime factors $\mathcal P_D$ of $ r_D:={\mathrm{gcd}}(\mathrm{Res}(G,H_{-c^2D}'); c\in [\![2,7]\!])$
       \FOR{$p$ in $\mathcal P_D$}
                \IF{$p > 10$ and $p < N+1$ and $p$ not  in $ \good$ and $p$ not in $\bad$}
                          \IF{$\chi_{-m}(p)=1$ for all $m$ in $L$}
                                     \STATE add $p$ to the list $\bad$  
                                   \ELSE 
                                   \STATE add $p$ to the list~$\good.$
                          \ENDIF         
                \ENDIF
       \ENDFOR         
       \STATE add $d$ to the list $L$ (and go to step \ref{boucle} with another $d$ in $\mathcal D$). 
\ENDFOR
                
\RETURN $[L,\bad]$.
\end{algorithmic} 
       
\textbf{Algorithm, Part II: }    
\begin{enumerate}
\item
 Let $\mathcal L'$ be a list of pairwise coprime moduli $d_1,\dots,d_n$ and put ${M=\mathrm{lcm}(\mathcal L')=d_1\dots d_n}$. The following function returns the non-zero squares modulo all the integers $d_1,\dots,d_n$ as a list of the form $[M,[$integers modulo $M]]. $
 
 \textbf{squares\_congruences($\mathcal L'$)}
\begin{algorithmic}[1]
\REQUIRE  a list $\mathcal L'$ of pairwise coprime moduli 
\STATE do a list $[[k^2 \pmod n \mid k\in\{1,\dots,(n-1)/2\}] : n \in \mathcal L']$ 
\RETURN $[\mathrm{lcm}(\mathcal L'),$ Chinese Remainder Theorem applied to the preceeding list$]$.  
      
\end{algorithmic}      
  
\item      Suppose given a list  $C=[M,[s_1,\dots,s_r]]$ with $M$ a moduli and $s_1,\dots,s_r$ integers modulo $M$, a list $L$ of quadratic imaginary discriminants, and two integers $n,m$ with ${n<m}$. The following function gives the prime numbers in range $[n,m[$ which are congruent to some $s_i$ modulo $M$ and which split in all the quadratic fields with discriminant in $L$.

\textbf{very\_bad\_primes($C,L,n,m$)}
\begin{algorithmic}[1]
\REQUIRE  $C,L,n,m$ as before.       
\STATE    $li\leftarrow [ ]$
\FOR{$i\in\{1,\dots,r\}$}
       \STATE $p \leftarrow s_i+\lceil \frac{n-s_i}{M}\rceil *M$ 
           \WHILE{$p < m$}
            \IF{$p$ is pseudoprime and  $\chi_{-D}(p)=1$ for all $D \in L$} 
                \IF{$p$ is prime} 
                     \STATE add $p$ to the list $li$ 
                \ENDIF
             \ENDIF      
            \STATE $p \leftarrow p+ M$
        \ENDWHILE    
\ENDFOR
\RETURN li      
\end{algorithmic}

\end{enumerate} 

\textbf{Applying the algorithms:} 
\begin{algorithmic}[1]
\STATE set $\mathcal D$ to be the list of quadratic imaginary discriminants of class number in range $[\![1,4]\!] $ to which we add $-87$, and\\
set $\mathcal D'=\{ -3,-4,-15,-20,-7,-11,-39,-52,-51,-68,-19,-23,-87 \} .$ 
\STATE $[L,\bad]\leftarrow $ bad\_discrim\_and\_primes($\mathcal D,10^{14}$)
\STATE $C\leftarrow$ square\_congruences($[3,4,5,7,11,13,17,19,23,29]$)
\STATE $V\leftarrow$ very\_bad\_primes($C,\mathcal D\backslash\mathcal D',11,10^{14}$)
\end{algorithmic}
Result: for any prime $p\in [11,10^{14}]$ such that $p\not\in \bad\cup V$, the rational points on $X_0^+(p^r)$ are trivial for all integer $r>1$.

    \end{itemize}

{\footnotesize

}
\end{document}